\newtheorem{theorem}{Theorem}[section]
\newtheorem{lemma}[theorem]{Lemma}
\theoremstyle{definition}
\newtheorem{definition}[theorem]{Definition}
\theoremstyle{remark}
\newtheorem{remark}[theorem]{Remark}
\numberwithin{equation}{section}
\newcommand{\Rev}[1]{{#1}}
\newcommand{\vertiii}[1]{{\|\kern-0.25ex | #1 
    | \kern-0.25ex \|}}
\newcommand{\dint}{\text{\rm int}}
\newcommand{\mean}[1]{\{\kern-1.1mm\{#1\}\kern-1.1mm\}}          
\newcommand{\wmean}[1]{\mean{#1}_{\mbf{w}}}                 
\newcommand{\jump}[1]{[\![#1]\!]}                        
\newcommand{\ltwo}[2]{\|{#1}\|_{#2}}
\newcommand{\ud}{\,\mathrm{d}}
\newcommand{\ndg}[1]{| \kern -.25mm \|{#1}| \kern -.25mm \|}
\newcommand{\nsdg}[1]{| \kern -.25mm \|{#1}| \kern -.25mm \|_{\rm s}}
\newcommand{\vecp}{{\bf p}}
\newcommand{\fes}{S_\mesh^{\vecp} }
\newcommand{\fesmone}{S_\mesh^{\vecp-{\bf 1}} }
\newcommand{\fesmapped}{S_{\mesh,map}^{\vecp} }
\newcommand{\diam}{\operatorname{diam}}
\newcommand{\mesh}{\mathcal{T}}
\newcommand{\norm}[2]{\|{#1}\|_{{#2}}}
\newcommand{\ncdg}[1]{| \kern -.25mm \|{#1}| \kern -.25mm \|_{\rm DG}}
\renewcommand{\k}{K}
\renewcommand{\mesh}{\mathcal{T}}
\newcommand{\mbf}[1]{\mathbf{#1}}
\newcommand{\bnabla}{\nabla_{\kern-.08cm\mesh}^{}}
\title[Robust interior penalty discontinuous Galerkin methods]{Robust interior penalty\\ discontinuous Galerkin methods}
\author{Zhaonan Dong} \address{
	1) Inria, 2 rue Simone Iff, 75589 Paris, France, 
	and 2) CERMICS, Ecole des Ponts, 77455 Marne-la-Vall\'{e}e 2, France }  \email{Zhaonan.Dong@inria.fr}
\author{Emmanuil H.~Georgoulis} \address{
1) School of Computing and Mathematical Sciences,
University of Leicester,
University Road,
Leicester, LE1 7RH,
United Kingdom, 
 2) Department of Mathematics, School of Applied Mathematical and Physical Sciences, National Technical University of Athens, Zografou 15780, Greece, and
3) IACM-FORTH, Crete, Greece} \email{Emmanuil.Georgoulis@le.ac.uk}
\begin{document}
	
	\begin{abstract} 
	 A new variant of the IPDG method is presented, involving carefully constructed weighted averages of the gradient of the approximate solution. The method is shown to be robust even for the most extreme simultaneous local mesh, polynomial degree and diffusion coefficient variation scenarios, without resulting into unreasonably large penalization. The new IPDG method, henceforth termed as \emph{robust IPDG} (RIPDG),  offers typically significantly better error behaviour and conditioning than the standard IPDG method when applied to scenarios with strong mesh/polynomial degree/diffusion local variation, \Rev{especially when} the underlying approximation space does not contain a sufficiently rich conforming subspace. The latter is of particular importance in the context of IPDG methods on polygonal/polyhedral meshes.  On the other hand, when using uniform meshes, constant polynomial degree for problems with constant diffusion coefficients the RIPDG method is identical to the classical IPDG.  Numerical experiments indicate the favourable performance of the new RIPDG method over the classical version in terms of conditioning and error.
	\end{abstract}

	\maketitle
	
	\section{Introduction}
	Discontinuous Galerkin  (DG) methods are a staple in the cascade of Galerkin approaches for the numerical solution of PDEs in divergence form. For the discretization of elliptic operators, the class of interior penalty DG (IPDG) methods is arguably the most popular choice due to its versatility and simplicity in implementation. The consistency of the IPDG method is achieved by the inclusion of specific terms involving integrals of solution fluxes on element interfaces. The basic, yet revolutionary, idea in IPDG methods is the further addition of penalization terms involving the jump of the approximate solution (or its derivatives for higher order elliptic PDEs) to control the departure from the PDE solution space and, therefore, to achieve stability in a consistent fashion. The IPDG method design concept was presented in full by Baker \cite{baker} and, briefly later, by Wheeler \cite{wheeler:78} and Arnold \cite{arnold}, thereby providing a consistent extension to the classical work of Nitsche \cite{nitsche1971variationsprinzip} for the weak imposition of boundary conditions \Rev{and} to the classical inconsistent finite element method with penalty of Babu{\v s}ka \cite{Babuska73}. These classical IPDG methods and their $hp$-version variants \cite{rwg,newpaper} are widely used to this day.

	Classical interior penalty discontinuous Galerkin (IPDG) methods for diffusion problems require a number of assumptions on the local variation of mesh-size, polynomial degree, and of the diffusion coefficient to determine the values of the, so-called, discontinuity-penalization parameter and/or to perform error analysis. Variants of IPDG methods involving weighted averages of the normal flux functions of the approximate solution have been proposed \cite{dryja,burman_zunino,ern_weighted} in the context of high-contrast diffusion coefficients to mitigate the dependence of the contrast in the stability and in the error analysis. More recently, weighted averages involving local meshsizes have been proposed for treating interface problems \cite{Robust_Nitsche,MR4078806} on non-matching grids on either side of the interface.
	

	To fix ideas, we shall focus on the basic second order elliptic boundary value problem on an open domain $\Omega\subset\mathbb{R}^d$, $d\in \mathbb{N}$: find $u\in H^1(\Omega)$ such that
	\begin{equation}\label{pde}
		\begin{aligned}
		-\nabla\cdot (a\nabla u) =&\  f \quad \text{ in } \Omega;\\
			 u=&\  g \quad \text{ on } \partial\Omega,
	\end{aligned}
	\end{equation}
with data $f\in L_2(\Omega)$, $g\in L_2(\partial\Omega)$ and a positive definite diffusion tensor $a\in [L_\infty(\Omega)]^{d\times d}$. Although considerable extensions to the problem can be incorporated immediately to the developments presented below, we refrain from exhausting the generalization possibilities in the interest of clarity of exposition.

Classical IPDG methods for the problem \eqref{pde} involve a user-defined quantity, the so-called \emph{penalty} or \emph{discontinuity-penalization} parameter. The judicious choice of this parameter is crucial for the stability of the IPDG method, but also for its practical behaviour regarding approximation quality and conditioning. In particular, it is well known that excessive penalty parameter typically leads to increased ill-conditioning of the IPDG stiffness matrix. At the same time, the penalty parameter has to be chosen large enough to ensure stability of the method. 

In this work, we focus on the  in the behaviour of interior penalty methods in the presence of highly locally variable meshes, local polynomial degrees, and/or diffusion coefficients $a$. As we will see below, in such extreme, often combined, local variation scenarios of discretization parameters and/or coefficients, the classical IPDG method typically requires excessively large penalization to retain stability. In fact, \emph{it is possible that classical IPDG penalty parameter choices degenerate to infinity while the dimension of the approximation spaces remains finite}. This, in turn may be detrimental to the conditioning of the problem and, in certain cases, to the quality of the approximation.

To resolve this state of affairs, we present a new weighted-type interior penalty discontinuous Galerkin method which is provably stable with a new choice of the discontinuity-penalization parameter and is \emph{robust} with respect to extreme local variations in the discretization parameters and in the diffusion coefficient. The method, termed henceforth as \emph{robust IPDG} (RIPDG) replaces the classical average-of-the-normal-flux term(s) in the IPDG by suitable, explicitly constructed weighted averages. This modification results into a robust and concrete selection of the discontinuity-penalization parameter, without resulting into excessive penalization in extreme combined local mesh, polynomial degree and coefficient variation scenarios. We note that a balanced choice for the discontinuity-penalization parameter is particularly important when \emph{no} conforming subspace is available. Moreover, the new RIPDG method allows for the first time in the interior penalty literature for the proof of a priori error bounds \emph{without} any local-quasi-uniformity/local bounded variation assumptions for the discretization parameters.

The modifications required to implement RIPDG starting from an available IPDG code are minimal and trivial. We envisage that the RIPDG method will be effective in the numerical approximation of complex, multiscale problems characterized by extreme local physical features necessitating highly non-uniform Galerkin approximation spaces.  In particular, as we shall see in the numerical experiments below, RIPDG is particularly pertinent in scenarios \Rev{where} \emph{no} conforming subspace of sufficient approximation capabilities is available. This is often the case when employing IPDG methods on meshes consisting of general polygonal/polyhedral (i.e., polytopic) elements, typically with many faces per element. These meshes can arise, among other processes, via mesh agglomeration procedures \cite{cangiani2013hp,book,dg-ease,BassiJCP2012}. Using elements resulting from agglomeration of finer unstructured meshes, in an effort to reduce computational complexity leads naturally to Galerkin spaces with inherent significant non-conformity. In such cases, the effect of discontinuity-penalization becomes evident and, in some extreme cases, appears to be dominating the convergence behaviour (cf., Example 1 with the ``zigzag'' mesh below).

At the other end of the spectrum, when using uniform meshes, constant polynomial degree to solve problems with constant diffusion coefficients RIPDG is identical to the classical IPDG. Moreover, within the usual setting of locally quasi-uniform meshes, even with locally bounded variable polynomial degree and small/medium contrast in the diffusion coefficient, RIPDG and IPDG offer essentially identical numerical solutions.

Indeed, numerical experiments indicate the favourable performance of the new RIPDG method over the classical version in terms of conditioning and error. In particular, despite extensive testing, we have \emph{not} been able to identify an example whereby the RIPDG method is inferior in terms of conditioning and/or error in various norms when compared to the classical IPDG method. In most cases the difference between IPDG and RIPDG is not significant, due to the fact that a sufficiently \Rev{rich} conforming subspace is present in the approximation. However, in certain extreme scenarios RIPDG offers significantly better approximation and/or conditioning. Although we expect that cases with IPDG outperforming RIPDG exist, we have not yet been able to identify such example despite extensive testing.

The remainder of this work is structured as follows. In Section \ref{sec:IPDG} we review the classical IPDG method and the weighted variant from \Rev{\cite{dryja,burman_zunino,ern_weighted}}, along with the basic argument for proving coercivity and continuity of the respective bilinear form. In Section \ref{RIPDG}, we present the new RIPDG method and discuss its construction and properties, as well as the motivating differences in the choice of the penalty parameter. In Sections \ref{poly_RIPDG} and \ref{RIPDG-deg} we provide extensions of RIPDG to polytopic meshes and to degenerate elliptic problems, respectively. In Section \ref{sec:apriori}, we present the basic Strang's-type Lemma satisfied by IPDG and RIPDG, showcasing that in the latter case the resulting a priori error bound is \emph{independent} of any local variation in meshsize, polynomial degree and diffusion contrast. We conclude with a series of numerical experiments showcasing the benefits in using weighted averages in scenarios with high such local variations.
	
\section{Interior penalty discontinuous Galerkin methods}\label{sec:IPDG}
	We consider meshes $\mesh$ consisting of mutually disjoint open simplicial and/or box-type elements $\k\in\mesh$ so that $\cup_{\k\in\mesh}\bar{\k} =\bar{\Omega}$. Let also $h_{\k}:=\diam(\k)$ the diameter of  $\k\in\mesh$, and define the mesh-function $\mbf{h}:\cup_{\k\in\mesh}\k\to\mathbb{R}_+$ by $\mbf{h}|_{\k}=h_{\k}$, $\k\in\mesh$. Let also $m_{\k}$ denote the number of $(d-1)$-dimensional faces of the element $\k\in\mesh$, i.e., $m_\k=d+1$ for a  $d$-dimensional simplex and \Rev{$m_\k = 2d$} for a $d$-dimensional box-type element, and define $\mbf{m}:\cup_{\k\in\mesh}\k\to\mathbb{R}_+$ by $\mbf{m}|_{\k}=m_{\k}$, $\k\in\mesh$.
	Further, we let $\Gamma:=\cup_{\k\in\mesh}\partial\k$ denote the mesh skeleton and set $\Gamma_{\dint}:=\Gamma\backslash\partial\Omega$. The mesh skeleton $\Gamma$ is decomposed into $(d-1)$--dimensional simplicial \Rev{or quadrilateral} \emph{faces} $F$, shared by at most two elements. These are distinct from elemental \emph{interfaces} $I$, herein defined as the simply-connected components of the intersection between the boundaries of two neighbouring elements.
	As such, hanging nodes/edges are naturally permitted: an interface between two elements may consist of more than one \Rev{faces residing on the same hyperplane in the presence of hanging nodes. Moreover, as we will see below, we may wish to view each quadrilateral (inter)face of a three-dimensional box-type element mesh as the union of two simplicial faces.} Let also $\bnabla$ denote the \emph{broken gradient} defined as $\bnabla v|_\k := \nabla (v|_\k)$, $\k\in\mesh$.

	The \emph{discontinuous Galerkin space} $\fes$ with 
	respect to $\mesh$ is defined as
	\[
	\fes :=\{v\in L_2(\Omega)
	:v|_{\k}\in\mathcal{P}_{p_\k^{}}(\k),\,\k\in\mesh\},
	\]
	with
	$\mathcal{P}_{p}(\k)$ denoting the space of $d$-variate polynomials of total degree up to $p$ on $\k$, and $\mbf{p}:\cup_{\k\in\mesh}\k\to\mathbb{R}_+$ with $\mbf{p}|_{\k}=p_{\k}^{}$, $\k\in\mesh$. The local elemental polynomial spaces employed within $\fes$ are defined in the \emph{physical coordinate system}, i.e., without mapping from a given reference or canonical frame (cf.~ \cite{cangiani2013hp,book,dg-ease}). All developments discussed below are also valid for the more `classical' case of the discontinuous Galerkin spaces constructed through the usual mapping $\mbf{F}_\k:\hat{\k}\to\k$ from a suitable reference element $\hat{\k}$, viz.,
		\[
	\fesmapped :=\{v\in L_2(\Omega)
	:v|_\k\circ \mbf{F}_{\k}\in\mathcal{P}_{p_\k^{}}(\hat{\k}),\,\k\in\mesh\}.
	\]
	
	Let $\k_+$ and $\k_-$ be two 
	adjacent elements of $\mathcal{T}$ sharing a face $F\subset\partial \k_+ \cap \partial \k_- \subset \Gamma_{\dint}$. 
	For $v$ and $\mbf{q}$ element-wise continuous scalar- and vector-valued functions, respectively,  we define the \emph{weighted average} across $F$ by
	$$
	\mean{v}_{w_F^{}}|_F:=w_+v_+|_F+w_-v_-|_F, \quad\mean{\mbf{q}}_{w_F^{}}|_F:=w_+\mbf{q}_+|_F+w_-\mbf{q}_-|_F,$$
	for $w_F:=(w_+,w_-)\in \mathbb{R}_+^2$ with $w_+ + w_-=1$,
	respectively, with $v_\pm|_F$ denoting the trace of $v$ from the element $\k_\pm$ on $F$, and correspondingly for $\mbf{q}$. Also, we define the \emph{jump} across $F$ by
	$$
	\jump{v}|_F :=v_+\mbf{n}_+ +v_-\mbf{n}_- , \quad \jump{\mbf{q}}|_F := \mbf{q}_+\cdot\mbf{n}_+ +\mbf{q}_-\cdot\mbf{n}_-,
	$$
with $\mbf{n}_\pm$ denoting the unit outward normal of $\k_\pm$ on $F$. We collect all weight pairs in a \emph{weight function} $\mbf{w}:\Gamma_\dint\to\mathbb{R}_+^2$, with $\mbf{w}|_F=w_F^{}$, for each face $F\subset \Gamma_\dint$.
	On a boundary face $F\subset  \partial\Omega\cap\partial\k$, 
	we set 
	$
	\mean{v}:=v, $  $ \mean{\mbf{q}}:=\mbf{q},  $
	$\jump{v} :=v\mbf{n} ,$ and $ \jump{\mbf{q}} := \mbf{q}\cdot\mbf{n} $, respectively, with $\mbf{n}$ the unit outward normal to $ \partial\Omega$. In view of the latter, we extend $\mbf{w}$ to $\Gamma$ by setting $\mbf{w}|_{\partial\Omega}=(1,0)$ with $\k_-=\emptyset$ there.

The (weighted) interior penalty discontinuous Galerkin method reads: find $u_h\in\fes$, (or in $\fesmapped$,) such that
	\begin{equation}\label{wip}
		B(u_h,v_h)=\ell(v_h),\qquad\text{for all } v_h\in \fes,
	\end{equation}
with 
\[
\begin{aligned}
	B(u_h,v_h):=&\ \int_\Omega a\bnabla u_h\cdot \bnabla v_h\ud x +\int_\Gamma \sigma \jump{ u_h}\cdot \jump{v_h}\ud s \\
	&-\int_\Gamma  \big(\wmean{a\nabla u_h}\cdot \jump{v_h}+\theta \wmean{a\nabla v_h}\cdot \jump{u_h}\big)\ud s,
\end{aligned}
\]
and
\[
\begin{aligned}
	\ell(v_h):=&\ \int_\Omega f v_h\ud x +\int_{\partial\Omega} g(\sigma v_h-\theta a\nabla v_h\cdot\mbf{n}) \ud s ,
\end{aligned}
\]
with $\theta\in[-1,1]$ and $\sigma:\Gamma\to\mathbb{R}$ the, so-called, \emph{discontinuity-penalization} or \emph{penalty} function, whose precise definition, along with that of the weights $\mbf{w}$ is a central concern in this work. Note that, selecting $w_F^{}=(1/2,1/2)$, for all $F\subset\Gamma_\dint$, we retrieve the classical IPDG method of Wheeler \cite{wheeler:78} and of Arnold \cite{arnold}, while the case $w_F^{}=(1,0)$ or $w_F^{}=(0,1)$ for all $F\subset\Gamma_\dint$ gives the original method of Baker \cite{baker}. Moreover, in \cite{dryja,burman_zunino},  the weighted average choice $w_F^{}=(\delta_+,\delta_-)$ with
\[
\delta_\pm := \frac{\alpha_\mp}{\alpha_++\alpha_-},
\]
with $\alpha_\pm:=a|_{\k_\pm}$,
 was proposed for the case of element-wise constant scalar diffusion coefficients, while in \cite{ern_weighted} the selection  $\alpha_\pm:=\mbf{n}_F^Ta|_{\k_\pm}\mbf{n}_F$
 for element-wise constant diffusion tensors was provided, by combining the ideas from \cite{dryja,burman_zunino} and \cite{georgoulis2006note}. More recently, weighted averages involving also local meshsizes have been proposed for treating interface problems \cite{Robust_Nitsche,MR4078806} on non-matching grids on either side of the interface. These choices result to robust \emph{a priori} error analysis with respect to the relative sizes of diffusion locally and/or the relative meshsize across interfaces. The main contribution of this work is the proposition of a different recipe for $\mbf{w}$, which will yield robust dependence not only with respect to the local variation of the diffusion, but also with respect to the local \emph{direction-wise} variations in meshsize and polynomial degree. This will, in turn, yield a robust and rigorous choice for the discontinuity-penalization parameter $\sigma$.

The choice $\theta=1$ in \eqref{wip} yields the \emph{symmetric} version, while the choice $\theta=-1$ the non-symmetric version of the weighted IPDG method. Although, we shall focus on the symmetric version in this work, as the most popular choice, the developments presented below are also valid for the whole range $\theta\in[-1,1]$, with trivial modifications of the arguments and of the formulas.  

To highlight the importance in the careful selection of $\sigma$, we consider a $d$-dimensional simplicial triangulation $\mesh$, we set 
$\theta=1$, $w_F=(1/2,1/2)$ for all faces $F\subset\Gamma_\dint$,  and we study the coercivity of the bilinear form $B(\cdot,\cdot)$ on $\fes$. 
To that end, for $v_h\in \fes$, have
\begin{equation}\label{coer}
\begin{aligned}
	B(v_h,v_h)=&\ \ltwo{\sqrt{a}\bnabla v_h}{L_2(\Omega)}^2 +\ltwo{\sqrt{\sigma} \jump{ v_h}}{L_2(\Gamma) }^2 -2\int_\Gamma  \mean{a\nabla v_h}\cdot \jump{v_h}\ud s,
\end{aligned}
\end{equation}
with $\mean{\cdot}:=\mean{\cdot}_{(1/2,1/2)}$ the usual arithmetic average used in standard IPDG methods. To further estimate the last term on the \Rev{right-hand side of \eqref{coer}}, we employ a
 \emph{trace inverse estimate}, such as the one below.
\begin{lemma}\label{improved_inv}
	Let $\k\in\mesh$ a simplicial $d$-dimensional simplex and one of its faces $F\subset \partial \k$. For any $v \in\mathcal{P}_p(\k)$, we have
	\begin{equation}\label{inv_ineq_gen}
		\ltwo{v}{L_2(F)}^2\le \frac{(p+1)(p+d)}{d}\frac{|F|}{|K|}\ltwo{v}{L_2(\k)}^2.
	\end{equation}
\end{lemma}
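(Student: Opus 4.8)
The plan is to recast the inequality as a sharp Rayleigh-quotient (generalised eigenvalue) bound and to exploit the conical structure of a simplex together with an adapted orthogonal polynomial basis. First I would reduce to a convenient configuration by noting that the claimed constant is affine-invariant: under an affine map between simplices both $\ltwo{v}{L_2(\k)}^2$ and $\ltwo{v}{L_2(F)}^2$ pick up the volume and face Jacobians, and these combine precisely into the factor $|F|/|\k|$, leaving the dimensionless number $(p+1)(p+d)/d$ untouched. Equivalently one may work directly on $\k$ through the cone parametrisation $x=(1-t)y+tA$, $y\in F$, $t\in[0,1]$, with $A$ the vertex opposite $F$; here the Jacobian is $(1-t)^{d-1}H\,\ud\sigma(y)\,\ud t$ with $H$ the height of $\k$ over $F$, so that $|\k|=\tfrac1d|F|H$ and hence $|F|/|\k|=d/H$. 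In these coordinates the target inequality becomes $\ltwo{v}{L_2(F)}^2\le \tfrac{(p+1)(p+d)}{H}\ltwo{v}{L_2(\k)}^2$, and the geometry has been stripped away.

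Next I would diagonalise the problem using a Koornwinder--Dubiner-type orthogonal basis adapted to $F$. Choosing an $L_2(F)$-orthonormal basis $\{q_j\}$ of $\mathcal{P}_p(F)$ graded by degree, and lifting each $q_j$ through the collapsed coordinates with a compensating factor $(1-t)^{\deg q_j}$ times a one-dimensional factor in $t$, one obtains a basis $\{\Phi_{j,m}\}$ of $\mathcal{P}_p(\k)$ that is orthogonal in $L_2(\k)$ and whose restriction to $F=\{t=0\}$ is proportional to $q_j$. Orthogonality in $L_2(\k)$ forces the $t$-factors in the block of fixed face-degree $j$ to be orthogonal with respect to the Jacobi weight $(1-t)^{2j+d-1}$ on $[0,1]$. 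Since the $q_j$ are $L_2(F)$-orthonormal, both sides split over these blocks and the Rayleigh quotient becomes a mediant of the block quotients; hence the global maximum equals the largest block maximum.

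Each block reduces to a one-dimensional extremal problem: maximise $|\hat g(1)|^2/\int_0^1 s^{\gamma}\hat g(s)^2\,\ud s$ over polynomials $\hat g$ of degree $P$, with $\gamma=2j+d-1$ and $P=p-j$ (here $s=1-t$, so $F$ corresponds to the endpoint $s=1$). Expanding $\hat g$ in the orthogonal polynomials $\{R_k\}$ for the weight $s^\gamma$ and applying Cauchy--Schwarz exactly as in the classical Legendre computation, the sharp block constant equals the endpoint reproducing-kernel value $\sum_{k=0}^{P}R_k(1)^2/\ltwo{R_k}{w}^2$, where $\ltwo{R_k}{w}^2:=\int_0^1 s^\gamma R_k^2\,\ud s$. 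The decisive computation is to show this equals $(P+1)(P+\gamma+1)=(p-j+1)(p+j+d)=:f(j)$; I would verify it either from the closed-form Jacobi endpoint values $R_k(1)$ and norms, or by induction on $P$ via the three-term recurrence. Since $f(j)$ is decreasing in $j\ge 0$ for $d\ge 1$, the maximum over blocks is attained at $j=0$, yielding $(p+1)(p+d)$ and identifying the face-constant mode as extremiser.

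The main obstacle is the exact evaluation of the one-dimensional endpoint reproducing-kernel constant, and the bookkeeping of Jacobi normalisations needed to land precisely on $(P+1)(P+\gamma+1)$ rather than on a mere upper bound of the correct order; the orthogonal-basis decoupling and the affine/cone reduction are comparatively routine once the collapsed-coordinate construction is in place. As a sharp-constant sanity check one can confirm equality in the degenerate cases $p=0$ (constants, where both sides equal $v^2|F|$) and $d=1$ (where the constant is $(p+1)^2$), both of which match the claimed formula.
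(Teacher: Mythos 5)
Your argument is correct and is, in substance, the same proof the paper relies on: the paper simply cites the sharp trace inverse inequality of Warburton and Hesthaven together with a scaling argument, and that reference establishes the constant exactly via the cone/collapsed-coordinate reduction, the face-adapted Koornwinder--Dubiner decoupling into Jacobi-weight blocks, and the endpoint reproducing-kernel evaluation $\sum_{k=0}^{P}(2k+\gamma+1)=(P+1)(P+\gamma+1)$ that you outline. Your block constant $f(j)=(p-j+1)(p+j+d)$ and its maximisation at $j=0$ check out, so the proposal is a correct self-contained version of the cited result.
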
 
\begin{proof}
	This follows immediately by combining a careful scaling argument with the main result from \cite{warburton2003constants}.
\end{proof}
We refer to \cite{dg-ease} for a generalization of the above estimate to general curved polygonal/polyhedral elements with arbitrary number of faces. Note that for shape-regular elements, \eqref{inv_ineq_gen} yields the familiar version
$
\ltwo{v}{L_2(\partial \k)}^2\le c_{\rm inv} p^2/h_\k \ltwo{v}{L_2(\k)}^2,
$ with the constant $c_{\rm inv}>0$ depending on the aspect ratio of the element $\k$ and the dimension $d$. If, instead, we have $v_h\in\fesmapped$, then $c_{\rm inv}$ will also depend on the elemental maps $F_\k$ if the latter are non-affine. 

Returning to \eqref{coer}, elementary calculations, along with \eqref{inv_ineq_gen}, for faces $F\subset \partial \k_+\cap
\partial \k_-$  give 
\[
\begin{aligned}
& \Big|2\int_\Gamma  \mean{a\nabla u_h}\cdot \jump{v_h}\ud s \Big|\\
\le &
\Rev{\sum_{F\subset \Gamma}\sum_{*\in\{+,-\}} \int_F  |a\nabla u_h|_{\k_*}  \jump{v_h}|\ud s}\\
 \le & \sum_{F\subset \Gamma}\sum_{*\in\{+,-\}} \!\!\!C_{\rm inv,*}\|a\mbf{n}|_{\k_*}\|_{L_\infty(F)}\ltwo{\nabla u_h}{L_2(\k_*)} \ltwo{\jump{v_h}}{L_2(F)}\\
 	\le & \sum_{F\subset \Gamma}\sum_{*\in\{+,-\}} \!\!\! C_{{\rm inv},*}\|a\mbf{n}|_{\k_*}\|_{L_\infty(F)}\|a^{-\frac{1}{2}}\|_{L_\infty(\k_*)}\ltwo{\sqrt{a}\nabla u_h}{L_2(\k_*)} \ltwo{\jump{v_h}}{L_2(F)}\\
 \le &\ \frac{1}{2} \ltwo{\sqrt{a}\bnabla u_h}{L_2(\Omega)}^2+\frac{1}{2}\sum_{F\subset \Gamma}\tau_F\ltwo{\jump{v_h}}{L_2(F)}^2,
 \end{aligned}
 \]
with  $C_{{\rm inv},*}:=\sqrt{p_{\k_*}^{} (p_{\k_*}^{}\!\!\!+d\!-\!1)|F|/(d|\k_*|)}$,  
\begin{equation}\label{tau}
\tau_F:=2\max_{*\in\{+,-\}} m_{\k_*}^{}C_{{\rm inv},*}^2\|a\mbf{n}|_{\k_*}\|_{L_\infty(F)}^2\|a^{-1}\|_{L_\infty(\k_*)},
\end{equation}
 and $m_{\k_*}$ the number of faces of the element $\k_*$, defined above; note that $\bnabla u_h\in \fesmone$, i.e., the discontinuous Galerkin space defined by the same mesh having polynomial basis of one degree less than $\fes$.

These developments, in conjunction with \eqref{coer}, imply
\[
 	\begin{aligned}
 		B(v_h,v_h)\ge&\ \frac{1}{2}\ltwo{\sqrt{a}\bnabla u_h}{L_2(\Omega)}^2 +\sum_{F\subset \Gamma}\int_F \big(\sigma-\frac{\tau_F}{2}\big) \jump{ u_h}^2\ud s,
 	\end{aligned}
 \]
 upon setting $v_h=u_h$.
 Therefore, the selection $\sigma|_F = \tau_F$, for all faces $F\subset \Gamma$ gives the coercivity estimate
 \begin{equation}\label{coer2}
		B(v_h,v_h)\ge \frac{1}{2} \ndg{v_h}^2,\ \text{ with }\ 
	 \ndg{v_h}:=	\sqrt{\ltwo{\sqrt{a}\bnabla u_h}{L_2(\Omega)}^2 + \ltwo{\sqrt{\sigma}\jump{v_h}}{L_2(\Gamma)}^2}\,.
\end{equation}
The above choice of the discontinuity-penalization parameter $\sigma$ is, to the best of our knowledge, the sharpest general estimate for simplicial meshes. An analogous choice of $\sigma$ was proposed in \cite{cangiani2013hp,dg-ease} for significantly more complex element shapes. We note that hanging nodes are permissible by viewing a simplex as a polytopic element with many co-planar faces and by modifying $m_\k$ accordingly. \Rev{The case of box-type elements will be discussed in more detail in Section \ref{sec:box} below.}

Let us define now \Rev{an} extension of the  bilinear form $B:\fes\times\fes\to\mathbb{R}$ for the special case at hand, to accept arguments from the larger space $\mathcal{V}:=H^1(\Omega)+\fes$, viz.,  $B:\mathcal{V}\times\mathcal{V}\to \mathbb{R}$ with
\[
\begin{aligned}
	B(\Rev{z},v):=&\ \int_\Omega a\bnabla \Rev{z}\cdot \bnabla v\ud x +\int_\Gamma \sigma \jump{\Rev{z}}\cdot \jump{v}\ud s \\
&-\int_\Gamma  \big(\mean{a\Pi_{\bf{p-1}} \nabla \Rev{z}}\cdot \jump{v}+ \mean{a\Pi_{\bf{p-1}} \nabla v}\cdot \jump{\Rev{z}}\big)\ud s,
\end{aligned}
\]	
for all $\Rev{z},v\in\mathcal{V}$, with $\Pi_{\bf{p-1}}:L_2(\Omega)\to \fesmone$ denoting the orthogonal $L_2$-projection operator onto $\fesmone$. Completely analogous arguments to the ones presented for the proof of \eqref{coer2}, along with the stability of $\Pi_{\bf{p-1}}$ in the $L_2$-norm, result in the continuity bound
  \begin{equation}\label{cont}
 	B(\Rev{z},v)\le \frac{3}{2} \ndg{\Rev{z}}\ndg{v},\qquad \Rev{z},v\in\mathcal{V}.
 \end{equation}

\begin{remark}
If a different Galerkin space than $\fes$ is used, one has to modify the coercivity and continuity analysis and, correspondingly, the definition of $\sigma$ accordingly. In particular, it may not be the case anymore that $\bnabla u_h\in\fesmone$, but rather that $\bnabla u_h$ resides on the Galerkin space itself.
\end{remark}

\section{A robust interior penalty discontinuous Galerkin method}\label{RIPDG}

We now propose a  \emph{robust} interior penalty discontinuous Galerkin method by taking advantage of appropriately selected weights in \eqref{wip}.  In particular, we will select diffusion tensor, mesh and local polynomial degree dependent weights. To that end, upon considering \eqref{wip}, we have
\begin{equation}\label{coer_wip}
	\begin{aligned}
		B(v_h,v_h)=&\ \ltwo{\sqrt{a}\bnabla v_h}{L_2(\Omega)}^2 +\ltwo{\sqrt{\sigma} \jump{ v_h}}{L_2(\Gamma) }^2 -2\int_\Gamma  \wmean{a\nabla v_h}\cdot \jump{v_h}\ud s.
	\end{aligned}
\end{equation}
Setting $C_{{\rm inv},*}:=\sqrt{p_{\k_*}^{} (p_{\k_*}^{}\!\!\!+d\!-\!1)|F|/(d|\k_*|)}$, using \eqref{inv_ineq_gen}, and working as before, we get, for $F\subset \partial \k_+\cap
\partial \k_-$,
\[
\begin{aligned}
&	\Big|2\int_\Gamma  \wmean{a\nabla u_h}\cdot \jump{v_h}\ud s \Big|\\
\le& 
	\ 2\! 	\sum_{F\subset \Gamma}\int_F \Big(\sum_{*\in\{+,-\}} w_*\|a\mbf{n}|_{\k_*}\|_{L_\infty(F)}|\nabla u_h|_{\k_*}| \Big)| \jump{v_h}|\ud s\\
	\le & 
		\ 2\! \sum_{F\subset \Gamma}\sum_{*\in\{+,-\}} \!\!\! C_{{\rm inv},*}w_*\|a\mbf{n}|_{\k_*}\|_{L_\infty(F)}\|a^{-\frac{1}{2}}\|_{L_\infty(\k_*)}\ltwo{\sqrt{a}\nabla u_h}{L_2(\k_*)} \ltwo{\jump{v_h}}{L_2(F)}.
\end{aligned}
	\]
	Setting now 
	\[
	\zeta_*:=\Big(2\sqrt{m_{\k_*}}C_{{\rm inv},*}\|a\mbf{n}|_{\k_*}\|_{L_\infty(F)}\|a^{-\frac{1}{2}}\|_{L_\infty(\k_*)}\Big)^{-1}, 
	\]
	for $*\in\{+,-\}$, and selecting 
	\begin{equation}\label{new_weights}
	w_* =\frac{\zeta_*}{\zeta_++\zeta_-},
	\end{equation}
	we deduce
	\[
	\begin{aligned}
	\Big|2\int_\Gamma  \wmean{a\nabla u_h}\cdot \jump{v_h}\ud s \Big|	\le &\ \frac{1}{2} \ltwo{\sqrt{a}\bnabla u_h}{L_2(\Omega)}^2+\frac{1}{2}\sum_{F\subset \Gamma}(\zeta_++\zeta_-)^{-2}\ltwo{\jump{v_h}}{L_2(F)}^2.
\end{aligned}
\]
A reasonable selection for the discontinuity-penalization parameter is, therefore, 
\begin{equation}\label{robust_penalty}
\sigma|_F^{}=(\zeta_++\zeta_-)^{-2},
\end{equation}
 with $\zeta_-=0$ in the case of a boundary face.

\begin{definition}
	The \emph{robust interior penalty discontinuous Galerkin method (RIPDG)}  is the Galerkin procedure defined by \eqref{wip} with $\mbf{w}=(w_+,w_-)$ for $w_*$ given by \eqref{new_weights}, $*\in\{+,-\}$ and the discontinuity-penalization parameter selected as in \eqref{robust_penalty}.
\end{definition}

\begin{remark}\Rev{The elementary identity $(\zeta_++\zeta_-)^{-2}\le(\min_{*\in\{+,-\}} 2 \zeta_*)^{-2}$, results into the bound 
		\begin{equation}\label{relation Harmonic weight}
			\sigma|_F^{}\leq \min_{*\in\{+,-\}} m_{\k_*}C^2_{{\rm inv},*}\|a\mbf{n}|_{\k_*}\|^2_{L_\infty(F)}\|a^{-\frac{1}{2}}\|^2_{L_\infty(\k_*)}. 
		\end{equation}
		The estimate \eqref{relation Harmonic weight} shows that the RIPDG discontinuity-penalization parameter grows proportionally  $\min_{*\in\{+,-\}} C_{{\rm inv},*}$ and is effectively independent of the local size of the diffusion coefficient. In contrast, for IPDG, the discontinuity-penalization parameter grows proportionally  with $\max_{*\in\{+,-\}} C_{{\rm inv},*}$ and is not robust with respect to the contrast; cf., \eqref{tau}.
	}
\end{remark}

\begin{remark}
	The above argument still applies for different choices of element-wise polynomial Galerkin spaces by replacing suitably $C_{{\rm inv},*}$ by an available upper bound of the respective inverse estimate constant for the space at hand. 
\end{remark}

\begin{remark}[On inverse estimate constants]\label{rem_const}
For different choices of element-wise Galerkin spaces, we may end up with unspecified/hard to estimate universal constants in the inverse estimate. We note carefully that \emph{any such constants cancel out in the definition of the weights $\mbf{w}$ and, thus, the weights are independent from such constants}, ensuring the practical selection of weights in various approximation space scenarios. Of course, as is the case for the standard IPDG method, the RIPDG penalty parameter will depend proportionally on such universal constants.
\end{remark}

The proof of \eqref{cont} is also then immediate upon considering the inconsistent formulation 
\[
\begin{aligned}
	B(\Rev{z},v):=&\ \int_\Omega a\bnabla \Rev{z}\cdot \bnabla v\ud x +\int_\Gamma \sigma \jump{\Rev{z}}\cdot \jump{v}\ud s \\
	&-\int_\Gamma  \big({\wmean{a\Pi_{\bf{p-1}} \nabla \Rev{z}}}\cdot \jump{v}+ {\wmean{a\Pi_{\bf{p-1}} \nabla v}}\cdot \jump{\Rev{z}}\big)\ud s,
\end{aligned}
\]	
for all $\Rev{z},v\in\mathcal{V}$,  resulting in the continuity bound
\begin{equation}\label{cont_two}
	B(w,v)\le \frac{3}{2} \ndg{\Rev{z}}\ndg{v},\qquad \Rev{z},v\in\mathcal{V}.
\end{equation}

We now compare the effects on the size of $\sigma$ of the classical choice $\mbf{w}=(1/2,1/2)$ and the of the one  proposed herein for the averaging operator. 
Any significantly different behaviour is bound to occur in extreme mesh and/or polynomial degree local variation scenarios. To that end, we consider the problem \eqref{pde} for $d=2$ with $a=I_{2\times 2}$, the identity matrix. The latter's solution is approximated via \eqref{wip} for $\mathbf{w}=(1/2,1/2)$ and also via \eqref{wip} with the choice \eqref{new_weights}, over a mesh consisting of two quadrilateral elements
\[
K_1=(0,1-\delta)\times (0,1),\qquad K_2=(1-\delta,1)\times (0,1),
\]
for $\delta <1/2$ and with uniform local polynomial degree $p$. The \Rev{discontinuity-penalization} parameter will differ for the two variants \Rev{only on the interior face} $F=\{1-\delta\}\times (0,1)$. For the classical IPDG method (\eqref{wip} with $\mathbf{w}=(1/2,1/2)$), we calculate:
\[
 \sigma_F\equiv\sigma_F^{\rm IP}=4p(p+1)/\delta,
\]  
whereas for the choice \eqref{new_weights}, we get
 \[
 \sigma_F\equiv\sigma_F^{\rm RIP}=\frac{8p(p+1)}{(\sqrt{1-\delta}+\sqrt{\delta})^2} .
 \]  
So, as $\delta\to 0$, we have $\sigma_F^{\rm IP}\to\infty$, while 
$
\sigma_F^{\rm RIP}\to 8 p(p+1). 
$
This extreme scenario highlights vastly different penalization pattern between the two variants. In the new choice of the weighted version of IPDG (\eqref{wip} with  \eqref{new_weights}), we observe \emph{robust} behaviour with respect to the local mesh variation. 

Completely analogously, we now set $\delta=1/2$, so that $K_1$ and $K_2$ are identical in terms of shape, and we set $p_{K_1}^{}=1$ and $p_{K_2}^{}=p>1$. We then compute
 \[
\sigma_F^{\rm IP}=8p(p+1)\to \infty,
\]  
as $p\to\infty$,
whereas
\[
\sigma_F^{\rm RIP}= 16\big( (p(p+1))^{-1/2}+2^{-1/2}\big)^{-2}\to 32.
\]

Having the penalty parameter converging to infinity may become an issue in terms of the conditioning of the resulting linear systems. Therefore,  when a conforming subspace of the DG space with the same approximation properties exists, e.g., for the case of a simplicial mesh, one expects only conditioning differences between the two variants. However, in the more practically interesting case whereby there is \emph{no} conforming subspace of full order, e.g., for the case of $\fes$ on a mesh $\mesh$ consisting of box-type (quadrliateral/hexahedral) elements, $\sigma_F^{\rm IP}\to\infty$ may be compromising in terms of approximation also.

%

\section{Extension to general polytopic elements}\label{poly_RIPDG}
The classical IPDG has been put forward as a method of choice for the development of Galerkin methods on meshes consisting of extremely general polygonal/polyhedral elements; see, e.g., \cite{cangiani2013hp,book,dg-ease,BassiJCP2012} and the references therein. A key development for practical applications has been their ability to admit provably stable approximations on elements $\k$ with \emph{arbitrary} number of faces $F$, possibly with $|F|\ll |\k|^{1/d}$. The weight selection in the robust IPDG method presented above depends on the number of elemental faces. Applying the method in the form \eqref{wip} with the selection \eqref{new_weights} is may lead to spurious, excessive over-penalization as $m_{\k_*}\to\infty$.  

Fortunately, it is often possible to use \emph{non-overlapping} simplicial subdivisions of the element $\k$ to effectively \Rev{arrive} at a bounded $m_\k$. More specifically, a number of extremely mild geometric conditions are presented in \cite{dg-ease} (cf.~also \cite{book} for earlier ideas in this vein) so that a polygonal/polyhedral element $\k$, possibly containing curved faces, admits an inverse estimate of the form
\begin{equation}\label{inv_dg-ease}
	\ltwo{v}{L_2(F_i)}^2\le C^2_{{\rm inv}}(p,F_i,\k)\ltwo{v}{L_2(\k)}^2,
\end{equation} 
for any $v \in\mathcal{P}_p(\k)$ and for $\{F_i\}_{i=1}^{m_\k}$ a mutually disjoint subdivision of $\partial\k$, with $C_{{\rm inv}}(p,F_i,\k)>0$ a concretely determined expression; see \cite[Lemma 4.21]{dg-ease} for the exact formula. A key idea in \cite{dg-ease} is that each $F_i$ is star-shaped with respect to one interior point $x_i$ of $\k$, $i=1,\dots,m_\k$, \emph{and} may contain an arbitrary number of (possibly very small in size) $(d-1)$-dimensional faces. 
Following the same line of argument as in Section \ref{RIPDG}, we can define a robust IPDG method on meshes consisting of general, possibly curved, polygonal/polyhedral elements by selecting
	\[
\zeta_*:=\Big(2\sqrt{m_{\k_*}^{}}C_{{\rm inv}}(p,F_i,\k)\|a\mbf{n}|_{\k_*}\|_{L_\infty(F)}\|a^{-\frac{1}{2}}\|_{L_\infty(\k_*)}\Big)^{-1}, 
\]
 and retaining the definitions of \eqref{new_weights} and \eqref{robust_penalty}. We note carefully that in the case of general polytopic meshes \eqref{inv_dg-ease} may contain an unknown constant which, nevertheless, cancels out in the definition of the weights as discussed in Remark \ref{rem_const}, yielding explicit choice for the weights $\mbf{w}$.

\Rev{
\section{On box-type elements}\label{sec:box}
In Section \ref{RIPDG}, the discussion focused on simplicial meshes, for which weights and discontinuity-penalization parameters for RIPDG, that are free of unknown constants are provided. The main technical tool has been the availability of a trace inverse estimate with \emph{explicitly known} constants for the simplicial reference element \cite{warburton2003constants}, also presented in Lemma \ref{improved_inv} for an arbitrary simplex. }

\Rev{
For box-type (quadrilateral, hexahedral, etc.) element meshes, classical discontinuous Galerkin literature and popular available implementations in software libraries propose the use of `$Q$-type' element bases (i.e., polynomial spaces of degree $p$ in \emph{each} variable) in conjunction with nonlinear element mappings. In other words, we are in the setting of $\fesmapped$ defined above, in complete analogy with the classical developments in conforming finite element methods. In this case, the trace inverse estimate becomes
	\begin{equation}\label{inv_ineq_gen_Q}
		\ltwo{v}{L_2(F)}^2\le (p_\k^{}+1)^2\|J_{\mbf{F}_\k}|_{F\circ\mbf{F}_\k}\|_{L_\infty(F\circ\mbf{F}_\k)}^{}\|J_{\mbf{F}_\k}^{-1}\|_{L_\infty(K)}^{}\ltwo{v}{L_2(\k)}^2,
	\end{equation}
with $J_{\mbf{F}_\k}$ denoting the Jacobian of the mapping $\mbf{F}_\k:(0,1)^d\to K$, for a box-type element $K\in\mesh$.
The proof follows by employing Fubini's Theorem in conjunction with Lemma \ref{improved_inv} for $d=1$ to show the trace inverse estimate
\[
	\ltwo{v\circ\mbf{F}_\k}{L_2(F\circ\mbf{F}_\k)}^2\le (p_\k^{}+1)^2\ltwo{v\circ\mbf{F}_\k}{L_2((0,1)^d)}^2,
\]
for any $(d-1)$-dimensional  interface $F\circ\mbf{F}_\k$ of the reference hypercube $(0,1)^d$, along with a standard scaling argument. This setting was alluded to in Remark \ref{rem_const} above. Therefore, employing \eqref{inv_ineq_gen_Q} to construct the weights from RIPDG will involve the Jacobian of the element maps; this may be somewhat inconvenient in certain practical scenarios.}

\Rev{An alternative point of view was proposed in \cite{cangiani2013hp} (see also \cite{cangiani2015hp,book,cangiani2017hp,DGpolyBiharmonic,dong_exponent,dg-ease} for further results), whereby the finite element space is defined via  $\fes$ for box-type elements also, that is element-wise polynomials of \emph{total} degree $p$ are employed even for box-type elements, without the use of non-linear mappings. As it was shown numerically in \cite{cangiani2013hp,cangiani2017hp} and proven in \cite{dong_exponent}, this choice results into improved spectral convergence under $p$-refinement in various settings, since less basis functions per element are used. In this spirit, we may view a box-type element $K\in\mesh$ as the union of $d$ non-overlapping simplicial elements and, correspondingly each $(d-1)$-dimensional box-type interface can be viewed as a union of  more than one $(d-1)$-dimensional simplicial faces. In such a setting, we can employ the developments of Section \ref{poly_RIPDG} to define the RIPDG weights and discontinuity-penalization parameter, exactly like in the case of a general polytopic element. The advantage of this approach is that nonlinear mappings are not involved in the constants, at the expense of a ``notional subdivision'' of the box-type element into non-overlapping, simplicial subelements.. Note that this process is not increasing the number of degrees of freedom: it is performed locally only for the definition of the weights and of $\sigma$.
}

\section{Robust IPDG for degenerate elliptic problems}\label{RIPDG-deg}
It is possible to have well-posed elliptic problems of the form \eqref{pde} with diffusion tensors $a$ for which $a\mbf{n}=\mbf{0}$, or even $a=\mbf{0}$, on lower-dimensional manifolds of $\Omega$ \cite{or73}. In such cases, the choice of weights from \eqref{new_weights} has to be revisited. To that end, we consider the  \emph{inconsistent robust interior penalty discontinuous Galerkin method} reading: 
find $u_h\in\fes$, (or in $\fesmapped$,) such that
\begin{equation}\label{wip_deg}
	\tilde{B}(u_h,v_h)=\tilde{\ell}(v_h),\qquad\text{for all } v_h\in \fes,
\end{equation}
with 
\[
\begin{aligned}
	\tilde{B}(u_h,v_h):=&\ \int_\Omega a\bnabla u_h\cdot \bnabla v_h\ud x +\int_\Gamma \sigma \jump{ u_h}\cdot \jump{v_h}\ud s \\
	&-\int_\Gamma  \big(\wmean{\sqrt{a}\Pi_{\mbf{p-1}}(\sqrt{a}\nabla u_h)}\cdot \jump{v_h}+\theta \wmean{\sqrt{a}\Pi_{\mbf{p-1}}(\sqrt{a}\nabla v_h)}\cdot \jump{u_h}\big)\ud s,
\end{aligned}
\]
and
\[
\begin{aligned}
	\ell(v_h):=&\ \int_\Omega f v_h\ud x +\int_{\partial\Omega} g\big(\sigma v_h-\theta \sqrt{a}\Pi_{\mbf{p-1}}(\sqrt{a}\nabla v_h\cdot\mbf{n})\big) \ud s ,
\end{aligned}
\]
for $\theta\in[-1,1]$, with $\Pi:L_2(\Omega)\to\fes$ (or, $\Pi:L_2(\Omega)\to\fesmapped$,) denoting the orthogonal $L_2$-projection operator onto the dG space. The method was first presented in \cite{georgoulis2006note} for the case $\mbf{w}=(1/2,1/2)$ and with $\Pi_{\mbf{p-1}}$ replaced by $\Pi_{\mbf{p}}$. For simplicity of the presentation, we shall again focus on the symmetric case $\theta=1$; the cases $\theta\in[-1,1)$ follow via trivial modifications of the arguments given below.

From \eqref{inv_ineq_gen}, and working as before, we get, for $F\subset \partial \k_+\cap
\partial \k_-$,
\[
\begin{aligned}
	&	\Big|2\int_\Gamma  \wmean{\sqrt{a}\Pi_{\mbf{p-1}}(\sqrt{a}\nabla u_h)}\cdot \jump{v_h}\ud s \Big|\\
	\le& \ 2
	\sum_{F\subset \Gamma}\int_F \Big(\sum_{*\in\{+,-\}} w_*\|\sqrt{a}\mbf{n}|_{\k_*}\|_{L_\infty(F)}|\Pi_{\mbf{p-1}}(\sqrt{a}\nabla u_h)|_{\k_*}| \Big)| \jump{v_h}|\ud s\\
	\le &\ 2 \sum_{F\subset \Gamma}\sum_{*\in\{+,-\}} \!\!\!C_{{\rm inv},*}w_*\|\sqrt{a}\mbf{n}|_{\k_*}\|_{L_\infty(F)}\ltwo{\sqrt{a}\nabla u_h}{L_2(\k_*)} \ltwo{\jump{v_h}}{L_2(F)},
\end{aligned}
\]
using the stability of the orthogonal $L_2$-projection operator.
Setting now 
\[
\tilde{\zeta}_*:=\Big(2\sqrt{m_{\k_*}}C_{{\rm inv},*}\|\sqrt{a}\mbf{n}|_{\k_*}\|_{L_\infty(F)}\Big)^{-1}, 
\]
for $*\in\{+,-\}$, and selecting, again, 
\begin{equation}\label{new_weights_deg}
	w_* =\frac{\tilde{\zeta}_*}{\tilde{\zeta}_++\tilde{\zeta}_-},
\end{equation}
we deduce
\[
\begin{aligned}
	\Big|2\int_\Gamma  \wmean{\sqrt{a}\Pi_{\mbf{p-1}}\sqrt{a}(\nabla u_h)}\cdot \jump{v_h}\ud s \Big|	\le &\ \frac{1}{2} \ltwo{\sqrt{a}\bnabla u_h}{L_2(\Omega)}^2
	+\frac{1}{2}\sum_{F\subset \Gamma}\sigma|_F\ltwo{\jump{v_h}}{L_2(F)}^2,
\end{aligned}
\]
with 
$\sigma|_F:=(\tilde{\zeta}_++\tilde{\zeta} _-)^{-2},
$ 
for every face $F\subset\Gamma$. If the limit $\sqrt{a}\mbf{n}|_{\k_*}\to \mbf{0}$ (from within the element interior to the face) is valid a.e.~on $F$ for exactly one of $*\in\{+,-\}$, we have $w_*\to1$, with the other weight in the pair tending to zero; also, in this case we have  $\sigma\to 0$.  
Further, if we have $\sqrt{a}\mbf{n}|_{\k_\pm}\to \mbf{0}$ from within the element interior to the face for both traces  a.e.~on $F$, we \Rev{adopt the convention that both $w_\pm\to0$ and $\sigma\to0$ on $F$ (i.e., we no longer enforce that $w_++w_-=1$)} .  

\section{A priori error analysis}\label{sec:apriori}
We now show that standard a priori error bounds hold for all the robust IPDG methods described in the previous section, assuming only sufficient regularity of the exact solution. In particular, \emph{no} `local bounded variation/local quasi-uniformity' mesh and polynomial degree assumptions are required, as is standard in the error analysis of the classical IPDG method with $\mbf{w}=(1/2,1/2)$.

More specifically, let $u\in H^{3/2+\epsilon}(\Omega)$, $\epsilon>0$. \Rev{Elementary calculations imply }
\begin{equation}\label{inconsistency form}
\Rev{	B(u_h,w_h) = B(u,w_h) + \int_\Gamma r(u) \cdot \jump{w_h}\ud s, \qquad w_h\in \fes,}
\end{equation}
with \Rev{$r(u):=\wmean {a\nabla u-\Pi_{\mbf{p-1}}(a\nabla u)}$} for the robust IPDG from Section \ref{RIPDG}, or $r(u):=\wmean {a\nabla u-\sqrt{a}\Pi_{\mbf{p-1}}(\sqrt{a}\nabla u)}$ for the variant from Section \ref{RIPDG-deg}.

Then, the coercivity, the continuity along with standard calculations imply
\begin{equation}\label{strang}
\ndg{u-u_h}\le {4}\inf_{v_h\in\fes}\ndg{u-v_h}+2\sup_{0\neq w_h\in \fes}\frac{\int_\Gamma r(u)\cdot \jump{w_h}\ud s}{\ndg{w_h}}.
\end{equation}

To estimate the inconsistency term, assuming further that $u\in H^2(\Omega)$, we make use of the best approximation estimate from \cite{chernov} (see also \cite{newpaper,thesis} for earlier results on box-type elements): there exists a constant $C_{\rm ap}>0$, independent of $v$ and of $p$ such that
\begin{equation}\label{best_app_l2_ref}
\Rev{	\norm{v-\Pi_{\mbf{p}}v}{L_2(\hat{F})}^2\le C_{\rm ap} (p+1)^{-1}\ltwo{\nabla v}{L_2(\hat{\k})}^2
},
\end{equation}
for any face $\hat{F}\subset \partial\hat{\k}$ of a reference simplicial element $\hat{\k}$. Employing a standard scaling argument, \eqref{best_app_l2_ref} implies for an element $\k\in\mesh$ the following best approximation estimate:
\begin{equation}\label{best_app_l2}
	\norm{v-\Pi_{\mbf{p}}v}{L_2(F)}^2\le C_{\rm ap} \frac{|F|h_\k^2}{|\k|(p+1)}\ltwo{\nabla v}{L_2(\k)}^2.
\end{equation}
Upon observing the identity $w_*\sigma^{-1/2}=\zeta_*$ (or $w_*\sigma^{-1/2}=\tilde{\zeta}_*$ for the method of Section \ref{RIPDG-deg}), we have, respectively, for any $\mbf{v}_h\in [\fesmone]^d$, 
\[
\begin{aligned}
&\int_\Gamma r(u)\cdot \jump{w_h}\ud s\\
\le & 
	\sum_{F\subset \Gamma}\!\sum_{*\in\{+,-\}}\!\!\! \zeta_*\|a\mbf{n}|_{\k_*}\|_{L_\infty(F)}\norm{\!\big(\nabla u -\mbf{v}_h-\Pi_{\mbf{p-1}}(\nabla u -\mbf{v}_h)\big)\!|_{\k_*}\!}{L_2(F)}  \norm{\sqrt{\sigma}\jump{w_h}}{L_2(F)}\\
\le & \sum_{F\subset \Gamma}\sum_{*\in\{+,-\}} \!\!\!\frac{\sqrt{C_{{\rm ap}}|F|}h_{\k_*}}{\sqrt{|\k_*|p_{\k_*}}}\zeta_*\|a\mbf{n}|_{\k_*}\|_{L_\infty(F)}|\nabla u-\mbf{v}_h|_{H^1(\k_*)} \norm{\sqrt{\sigma}\jump{w_h}}{L_2(F)}\\
\le & \sum_{F\subset \Gamma}\sum_{*\in\{+,-\}} \!\!\!	\frac{\|\sqrt{a}\|_{L_\infty(\k_*)}\sqrt{dC_{{\rm ap}}}}{\Rev{2}\sqrt{m_{\k_*}}\sqrt{ p_{\k_*}^{}\!\!\!+d\!-\!1}}\frac{h_{\k_*}}{p_{\k_*}^{}}|\nabla u-\mbf{v}_h|_{H^1(\k_*)} \norm{\sqrt{\sigma}\jump{w_h}}{L_2(F)}
\\
\le & \ \Big(\frac{dC_{{\rm ap}}}{\Rev{2}}\sum_{\k \in \mesh}	\|\sqrt{a}\|_{L_\infty(\k)}^2\frac{h^2_{\k}}{p_{\k}^{3}}|\nabla u-\mbf{v}_h|_{H^1(\k)}^2\Big)^{\frac{1}{2}} \norm{\sqrt{\sigma}\jump{w_h}}{L_2(\Gamma)},
\end{aligned}
	\] 
 using \eqref{best_app_l2} in the first step and the definition of $\zeta_*$ in the penultimate step. The last estimate concerns the robust IPDG method from Section \ref{RIPDG} and holds under the regularity assumption $u\in H^2(\Omega)$.
	
		Correspondingly, for the method of Section \ref{RIPDG-deg}, assuming that the $a$ is such that $\sqrt{a}\nabla u \in [H^1(\Omega)]^d$, (\emph{instead of} $u\in H^2(\Omega)$ as before,)  we arrive at
\[
\begin{aligned}
	\int_\Gamma r(u)\cdot \jump{w_h}\ud s
	\le & \ \Big(	\frac{dC_{{\rm ap}}}{\Rev{2}}\sum_{\k \in \mesh} \frac{h^2_{\k}}{p_{\k}^{3}}|\sqrt{a}\nabla u-\mbf{v}_h|_{H^1(\k)}^2\Big)^{\frac{1}{2}} \norm{\sqrt{\sigma}\jump{w_h}}{L_2(\Gamma)}.
\end{aligned}
\] 
Combining the above developments with \eqref{strang}, we deduce
\begin{equation}
	\begin{aligned}
\ndg{u-u_h}\le&\  {4}\inf_{v_h\in\fes}\ndg{u-v_h}\\
&\ +\sqrt{\Rev{2} d C_{\rm ap}}\inf_{\mbf{v}_h\in [\fesmone]^d}\Big(\sum_{\k \in \mesh}	\|\sqrt{a}\|_{L_\infty(\k)}^2\frac{h^2_{\k}}{p_{\k}^{3}}|\nabla u-\mbf{v}_h|_{H^1(\k)}^2\Big)^{\frac{1}{2}},
\end{aligned}
\end{equation}
for the robust IPDG of Section \ref{RIPDG}, or 
\[
		\ndg{u-u_h}\le  {4}\inf_{v_h\in\fes}\ndg{u-v_h}+\sqrt{\Rev{2}d C_{\rm ap}}\inf_{\mbf{v}_h\in [\fesmone]^d}\Big(\sum_{\k \in \mesh}	\frac{h^2_{\k}}{p_{\k}^{3}}|\sqrt{a}\nabla u-\mbf{v}_h|_{H^1(\k)}^2\Big)^{\frac{1}{2}},
\]
for the method of Section \ref{RIPDG-deg}, respectively. These basic estimates, together with standard $hp$-version best approximation results, give rise to standard a priori error bounds. Although not considered in detail here in the interest of brevity, following essentially the same steps as above, it is also possible to prove a priori error bounds for the robust IPDG method on polygonal/polyhedral elements from Section \ref{poly_RIPDG}, with the possible exception of considering simplicial coverings of the elements $\k\in\mesh$ along with extension operators; we refer to \cite{cangiani2013hp,book,dg-ease} for details.

The crucial development in the a priori error bounds above, compared to all respective bounds we are aware of in the literature, is that the best approximation estimates are \emph{completely localized element-wise}. This is in contrast to standard IPDG error bounds requiring local quasi-uniformity/bounded variation assumptions for the mesh size and local polynomial degree. This property is expected to be of importance on approximation space scenarios admitting extreme local variation.

\begin{remark}
In the above best approximation results we have taken a somewhat ``classical'' viewpoint of assuming sufficient regularity of the exact solution. More recent developments e.g.~\Rev{the so-called medius analysis} \cite{gudi}, or the use of recovery operators \cite{veeser-zanotti}, can provide quasi-optimality under minimal regularity assumptions on the exact solution. These approaches, however, are currently available only for the $h$-version IPDG under standard local quasi-uniformity mesh \Rev{and diffusion coefficient bounded local variation} assumptions for which the robust IPDG method presented in this work offers little or no advantage. Nonetheless, the extension of such ``optimal'' error analysis frameworks to the RIPDG setting is certainly of  interest.
\end{remark}


\section{Numerical Experiments}\label{sec:numerics}
We now present a series of numerical experiments showcasing the comparative performance between the classical IPDG and RIPDG. All numerical experiments below use the locally \emph{total degree $p$} approximation space per element $\fes$ on box-type or general polytopic element shapes, noting carefully that no conforming subspace of $\fes$  of the same degree is available.  This choice of approximation spaces has been advocated in \cite{cangiani2013hp,book,dg-ease} leading to optimal approximation yet reduced complexity for box-type elements.

Given that the effect of high contrast diffusion coefficient has been discussed extensively in \cite{dryja,burman_zunino,ern_weighted}, we confine our study to the effects of extreme mesh and polynomial degree variation. Nonetheless, we expect that the inclusion of high contrast diffusion coefficient in the spirit of \cite{burman_zunino,ern_weighted} in conjunction with extreme local variation scenarios will only augment the behaviours presented below. We stress that in \emph{all} comparisons presented below both methods have exactly the same numerical degrees of freedom as they are applied to the same approximation space; the two respective implementations differ \emph{only} on the choice of the weights $\bf{w}$ and, correspondingly, to the choice of the penalty parameter. \Rev{Therefore, any differences observed in performance are due to the different methods implemented in the otherwise identical program, and not on any other algorithmic factors (e.g., choice of quadrature or condition number estimation routines, etc.)}.
\subsection{Example 1: singularly perturbed reaction-diffusion problem}
	\begin{figure}
	\includegraphics[height=4.5cm,width=4.5cm]{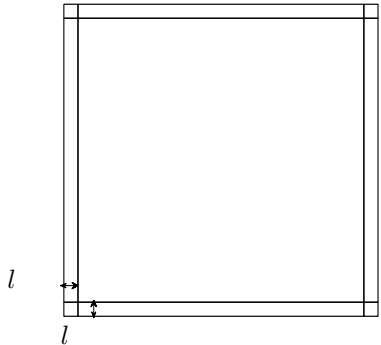}
	\put(-140,18){$l$}
	\put(-120,-3){$l$}
	\caption{Example 1. The $9$ element mesh.}\label{fig:ex1_mesh}
\end{figure}
We begin by testing the method on anisotropic rectangular elements in the context of layer adapted $hp$-version IPDG/RIPDG method.   Consider the problem
$$
-\epsilon \Delta u + u =f, \qquad \text{on } \Omega := (-1,1)^2,
$$	
with homogeneous Dirichlet boundary conditions and $f$ an analytic function chosen so that the exact solution reads
	\begin{equation}
		u(x,y):= \Big(1-\frac{\cosh(x/\sqrt{\epsilon})}{\cosh(1/\sqrt{\epsilon})} \Big)
		\Big(1-\frac{\cosh(y/\sqrt{\epsilon})}{\cosh(1/\sqrt{\epsilon})}\Big).
	\end{equation}
	This example has been studied extensively in \cite{melenk1999hp,thesis}. The solution exhibits boundary layers of thickness $\mathcal{O}(\sqrt{\epsilon})$. 
	
	To resolve the layers, we first use a layer-adapted anisotropic $9$-element mesh with characteristic width $l:=\min\{\lambda p \sqrt{\epsilon},0.5\}$, and some constant $\lambda>0$; see Figure \ref{fig:ex1_mesh} for an illustration.  It was shown in \cite{melenk1999hp} that conforming Galerkin methods on such meshes offer a robust exponential convergence under $p$ refinement.  Note that the maximum meshsize local variation constant $r$ for the $9$-element mesh is given by
	$
	r:= {2(1-l)}/{l}.
	$
	For $\epsilon \ll 1$,  we have
	$
	r\approx 2/ ( \lambda p\sqrt{\epsilon} ).
	$

	We compare the symmetric versions of the classical IPDG against the RIPDG method on the same meshes for polynomial degrees $p=1,\dots,7$. In Figure \ref{fig:ex1_error},  it is observed  that both methods converge exponentially in both dG-norm and broken $H^1$-seminorm $|\cdot|_{H^1(\Omega,\mesh)}:=\|\bnabla \cdot\|_{L_2(\Omega)}$ against the square root of the total numerical degrees of freedom ($DoFs$) under $p$ refinement for $\epsilon=10^{-5}$ and $\lambda=0.9$. The RIPDG appears to outperform IPDG in both measures. 
	\begin{figure}[t]
		\includegraphics[height=5.5cm,width=6cm]{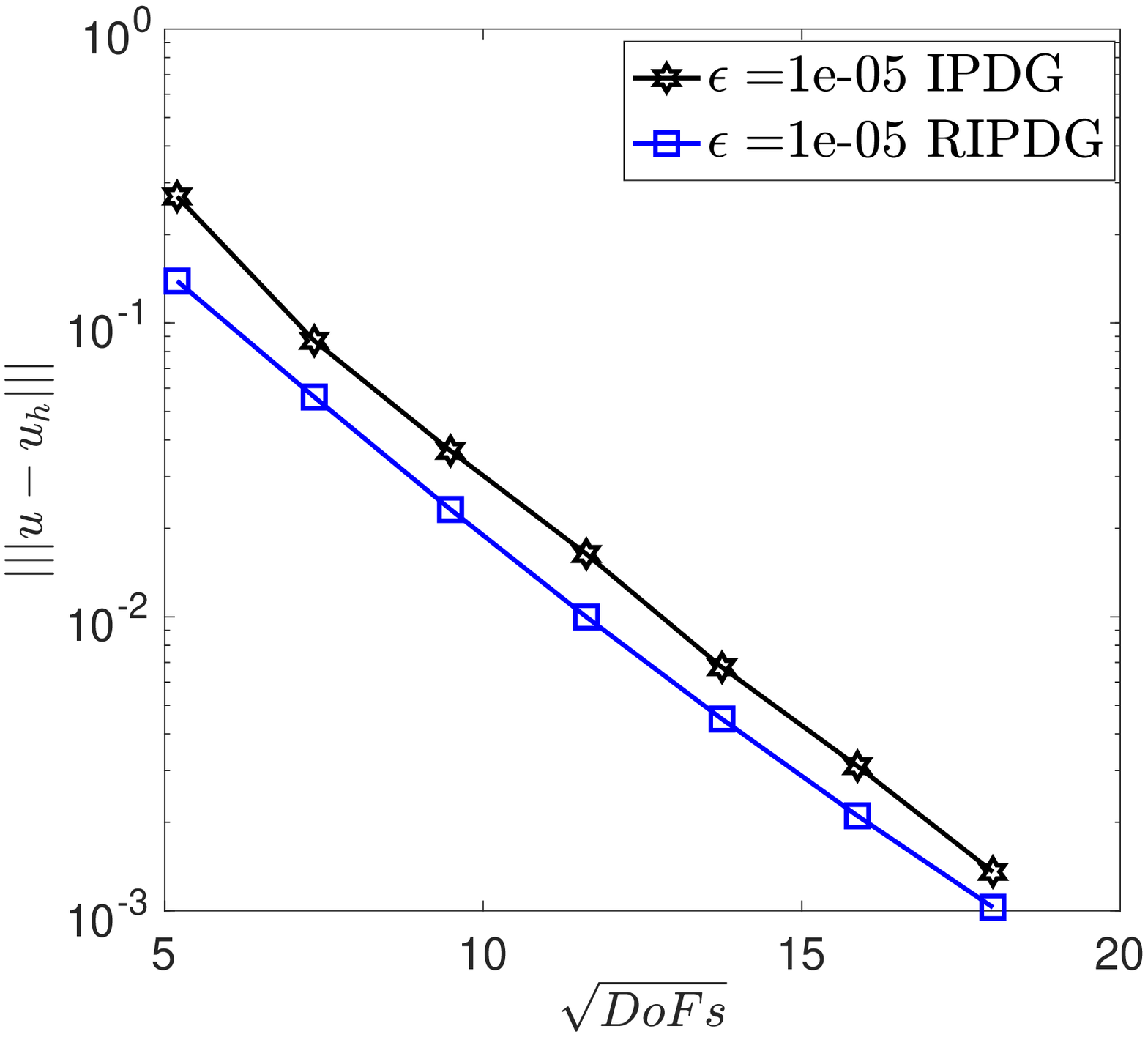}
		\includegraphics[height=5.5cm,width=6cm]{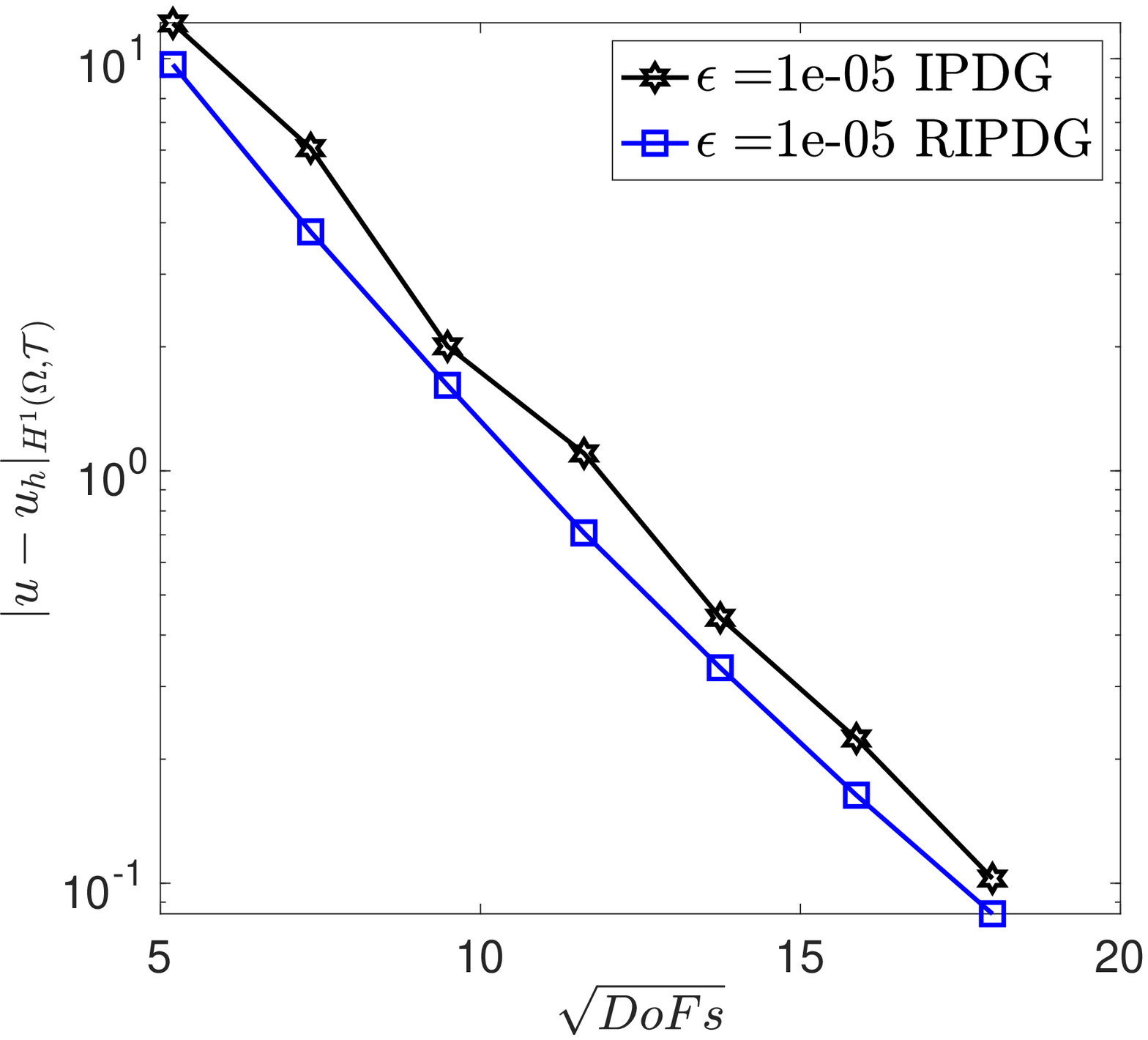}
		\caption{Example 1. Convergence in dG-norm (left)  and broken $H^1$-seminorm (right)  for $\epsilon=10^{-5}$ and $p=1,\dots,7$.}\label{fig:ex1_error}
	\end{figure}
\begin{figure}[h!]
	\includegraphics[height=5.5cm,width=6cm]{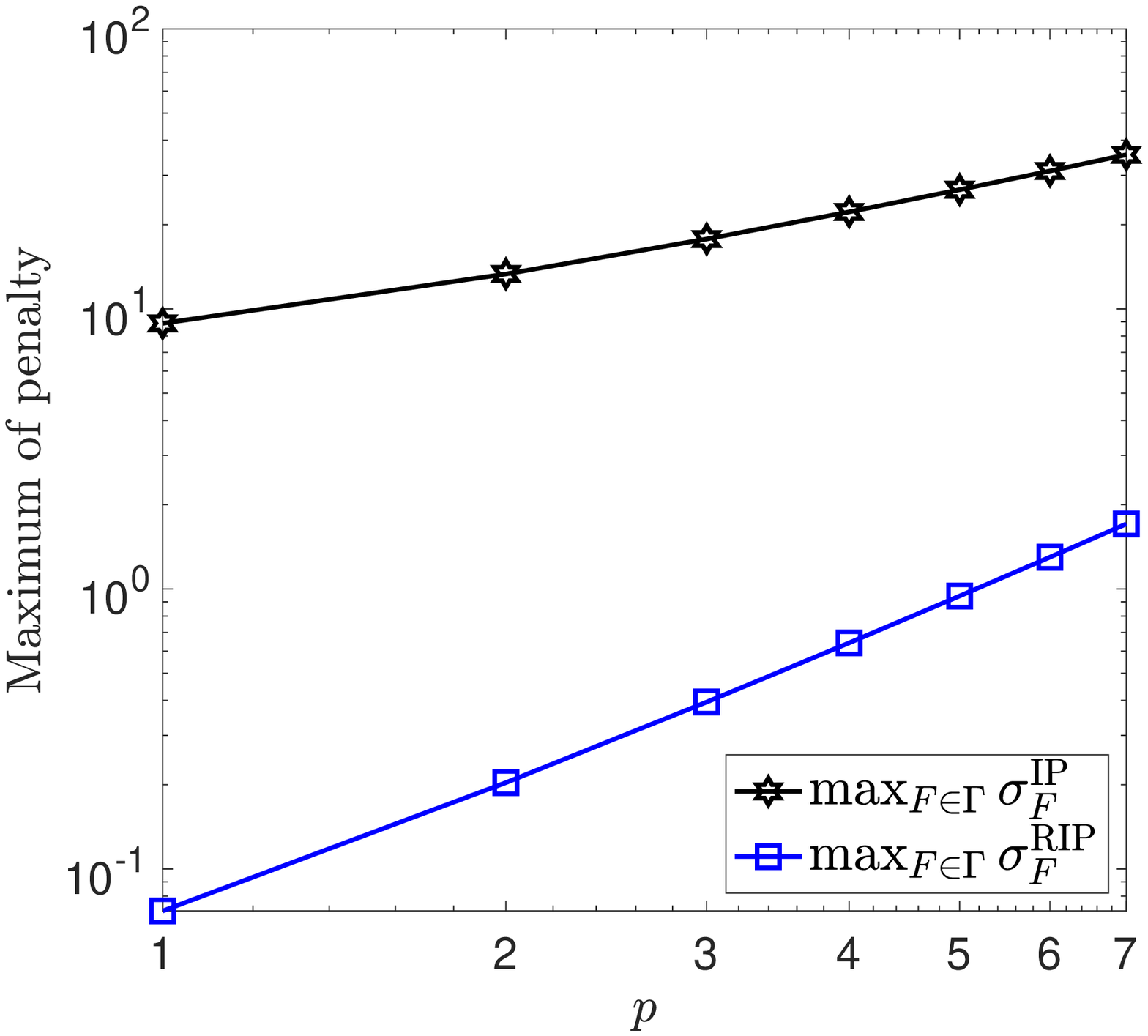}
	\includegraphics[height=5.5cm,width=6cm]{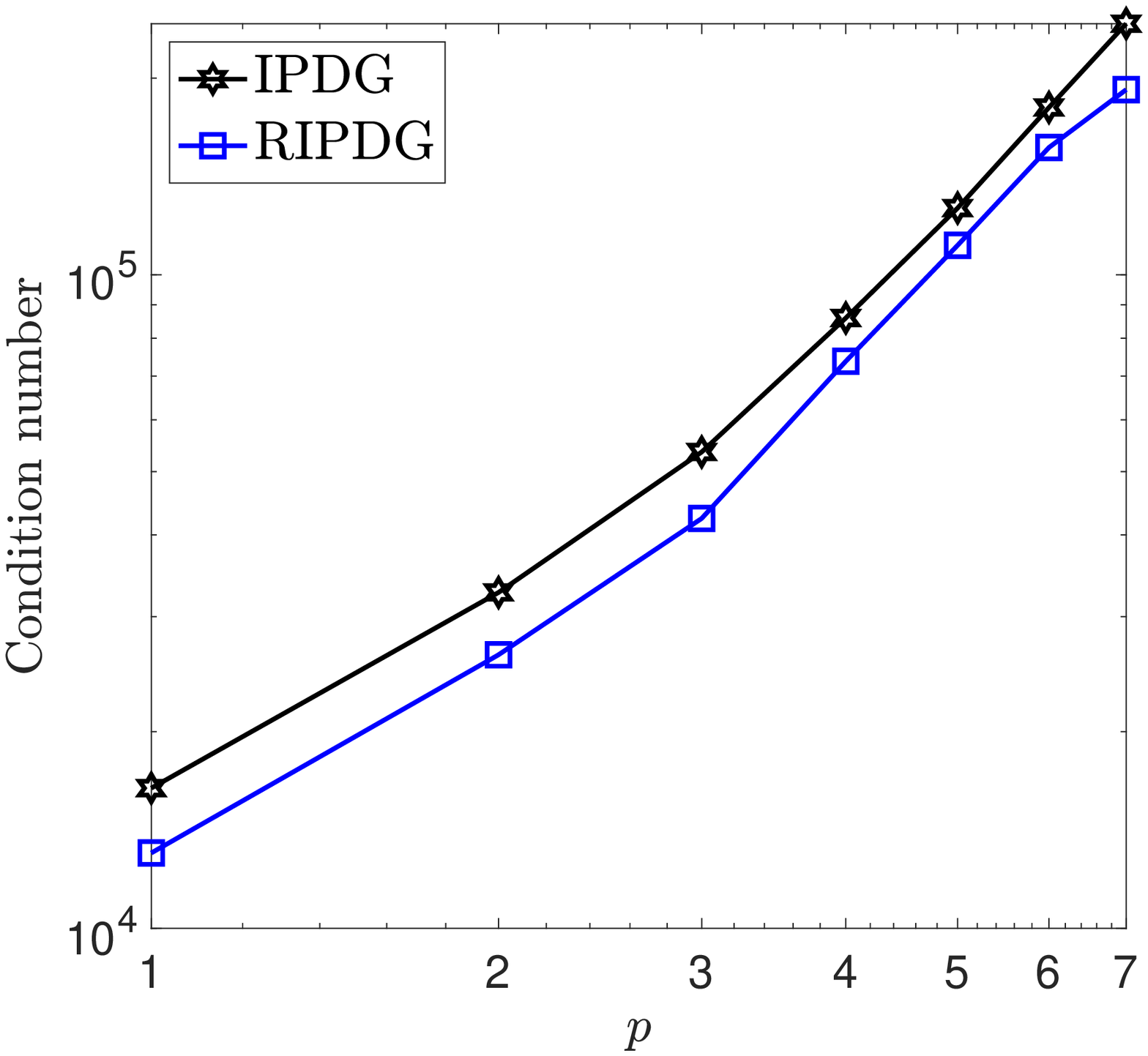}
	\caption{Example 1. Maximum of the penalty parameter (left) and the condition number of the linear system (right)  for $\epsilon=10^{-5}$ and $p=1,\dots,7$.}\label{fig:ex1_condition_no_sigma}
\end{figure}

In Figure \ref{fig:ex1_condition_no_sigma} (left), we compare the magnitudes of the global maximum of the penalty parameter values $\max_{F\subset\Gamma}\sigma_F^{\rm IP}$ and $\max_{F\subset\Gamma}\sigma_F^{\rm RIP}$, respectively, as well as the condition numbers for each method based on the same choice of basis. As expected, RIPDG requires a far smaller penalty parameter in theory for stability compared to the standard IPDG. For $p=1$, $\max_{F\subset\Gamma}\sigma_F^{\rm IP}$ is \textcolor{black}{$120$ times larger than} $\max_{F\subset\Gamma}\sigma_F^{\rm RIP}$. This difference in the definition of the penalty parameter explains also the improvement in the condition number for RIPDG compared to IPDG by a factor \textcolor{black}{ about 1.3}  for $p=1,\cdots,7$; see Figure \ref{fig:ex1_condition_no_sigma} (right).

In the above experiment, we observe only a modest improvement on error and the conditioning by using RIPDG as opposed to IPDG. We highlight that the $hp$-version spaces used for convergence admit a conforming subspace of the same approximation capabilities.

\begin{figure}[t]
	\includegraphics[height=5.5cm,width=6.2cm]{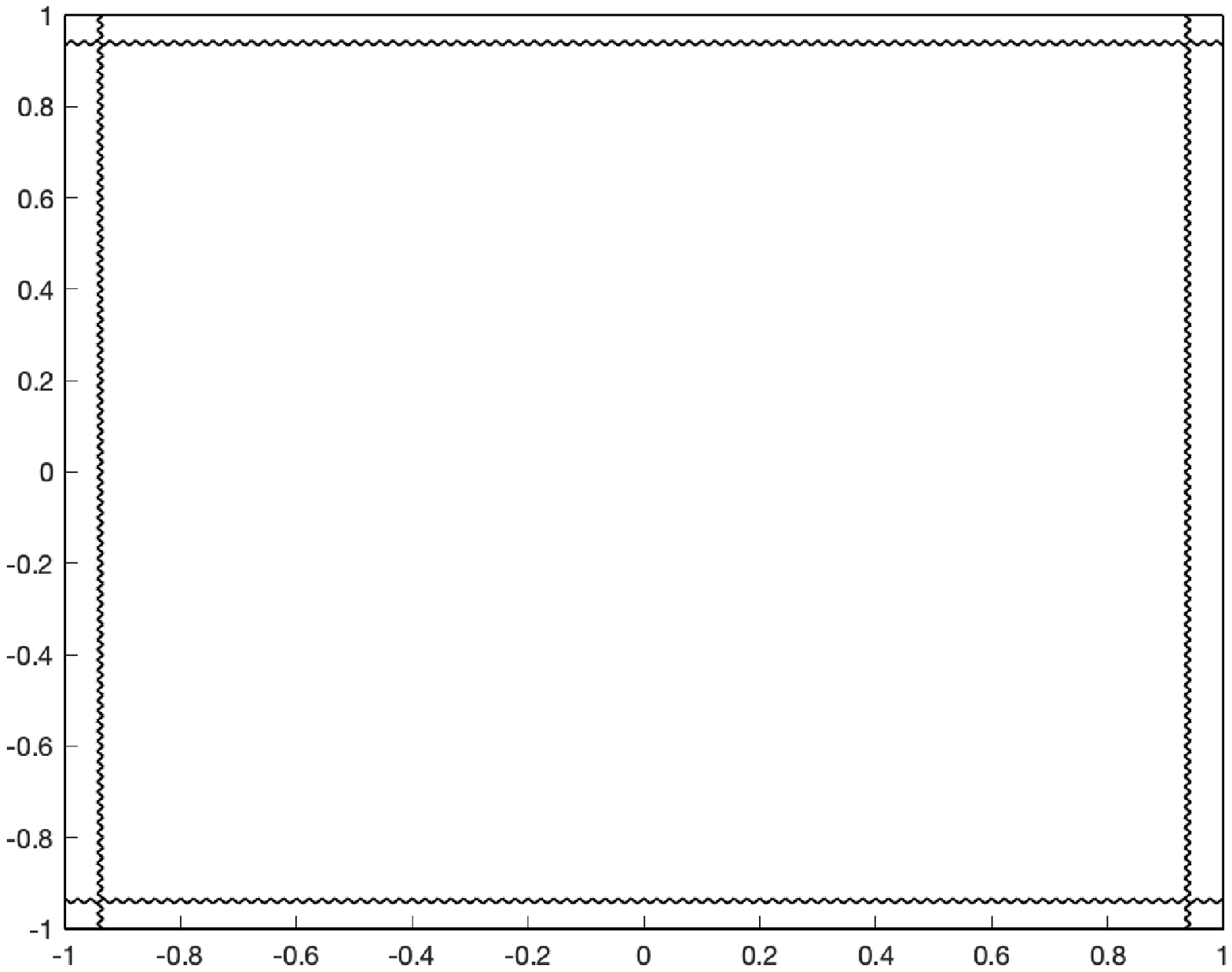}\hspace{-.5cm}
	\includegraphics[height=5.5cm,width=6.2cm]{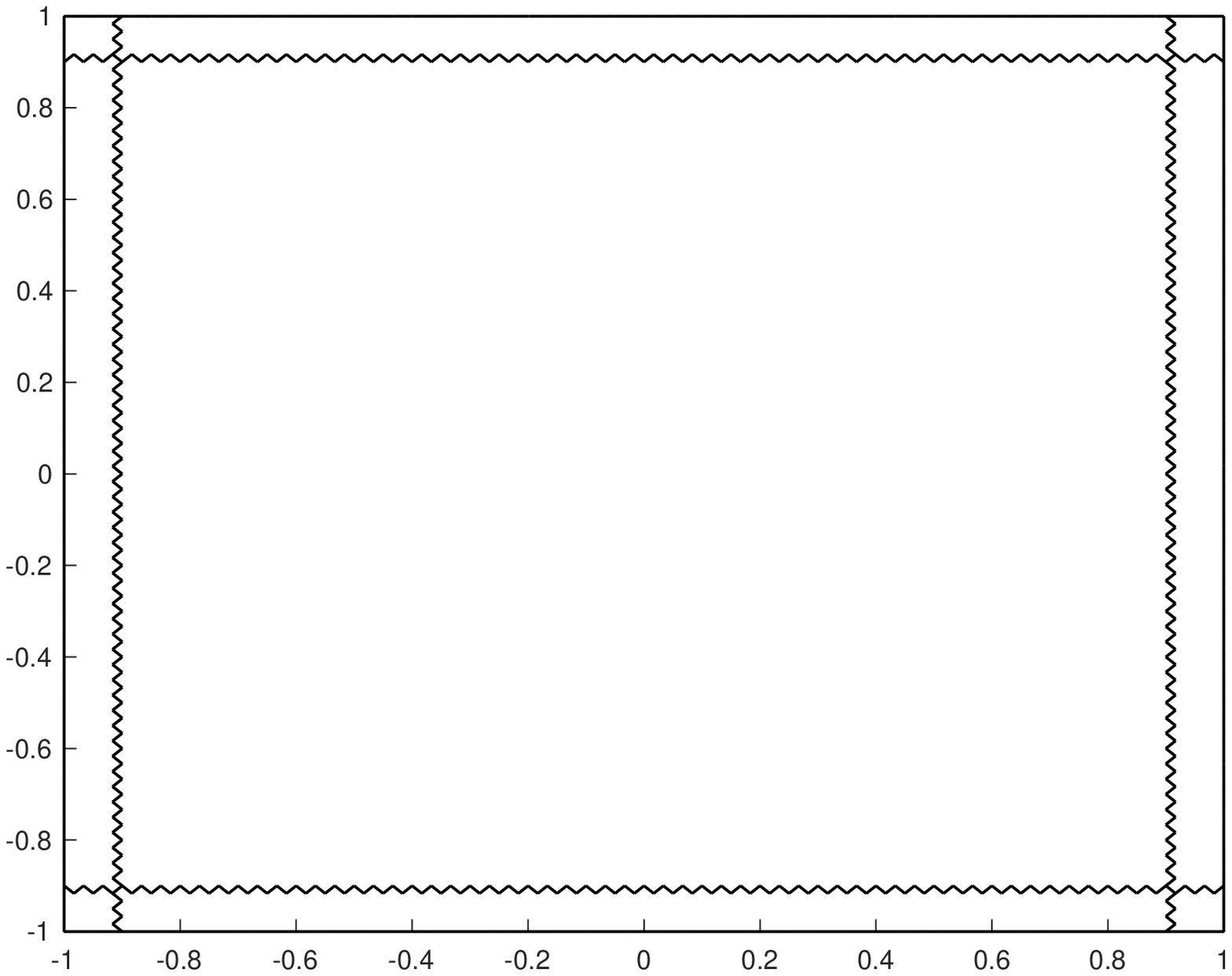}
	\includegraphics[height=5.5cm,width=6.2cm]{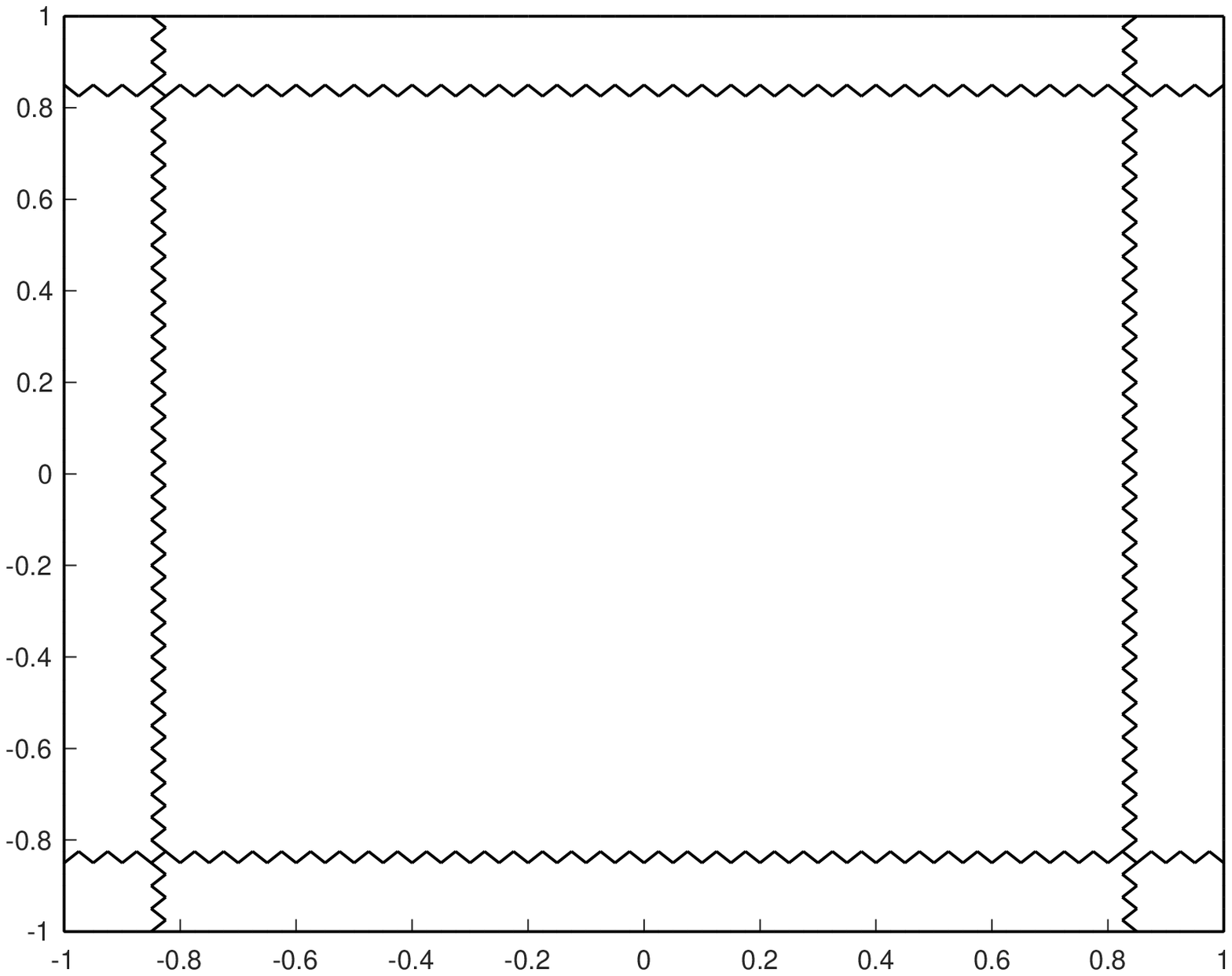}\hspace{-.5cm}
	\includegraphics[height=5.5cm,width=6.2cm]{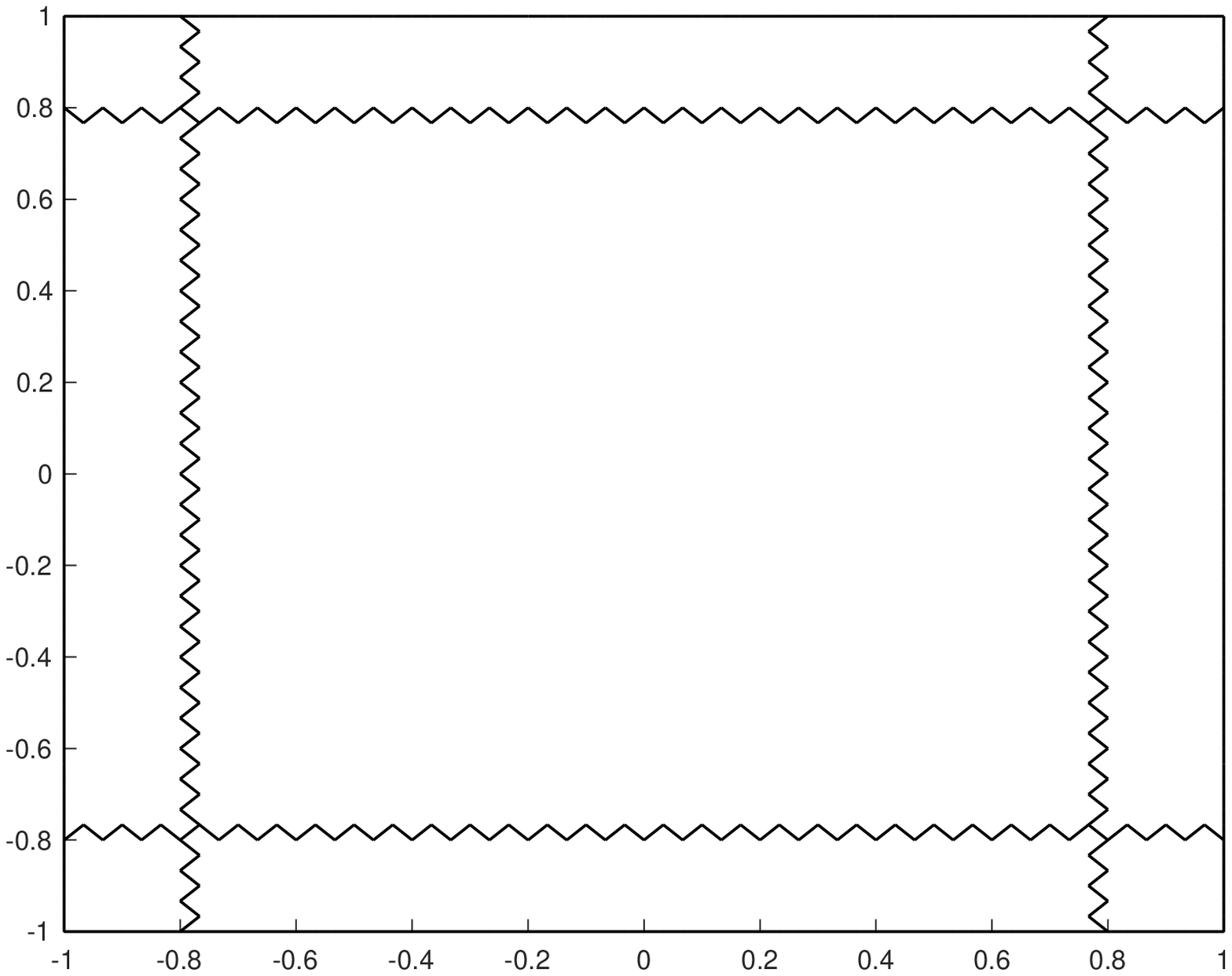}
	\caption{Example 1. ``Zigzag'' meshes for $p=2$ (top left), $p=3$ (top right), $p=5$ (bottom left), and $p=7$ (bottom right).  }\label{fig:ex1_zigzag_meshes}
\end{figure}

We now employ a ``zigzag'' polygonal version of the $9$ element mesh shown in Figure \ref{fig:ex1_zigzag_meshes} for $p=2,3,5,7$. In particular, we replace the interior faces from \Rev{the Figure} \ref{fig:ex1_mesh} by ``zigzag'' curves of amplitude {$l/6$} with respect to the length scale $l=\min\{\lambda p\sqrt{\epsilon},0.5\}$; the latter still characterises the distance of each interface to the non-intersecting portion of the boundary. On this $9$ element polytopic mesh, we solve the same problem using the RIPDG and IPDG methods, on physical (i.e., unmapped) element-wise polynomial spaces of uniform degree $p$ for ${p=2},\dots, 8$ and $\epsilon=10^{-3}$, with the penalty choice informed by the inverse estimate \eqref{inv_dg-ease}.  We observe that in this mesh \emph{no} conforming subspace of sufficient approximation capabilities is available. 

In Figure \ref{fig:ex1_condition_no_sigma_zigzag}, we compare the magnitudes of the global maximum of the penalty parameter values and the condition numbers for each method based on the same choice of basis. 	In Figure \ref{fig:ex1_error_zigzag}, we compare the respect errors for $p=1,\dots,8$. In Figure \ref{fig:ex1_error_zigzag},  it is observed  that both methods converge exponentially in  dG-norm, and broken $H^1$-seminorm against the square root of the total numerical degrees of freedom ($DoFs$) under $p$ refinement for $\epsilon=10^{-3}$ and $\lambda=0.9$. The RIPDG appears to significantly outperform IPDG for higher $p$; the IPDG appears to stagnate for $p=7,8$. We believe that the reason behind this behaviour is the IPDG's penalization magnitude in conjunction with the essential non-conformity of the approximation space. On the other hand, RIPDG's $p$-convergence appears not to be affected.

\begin{figure}[h!]
	\includegraphics[height=5.5cm,width=6cm]{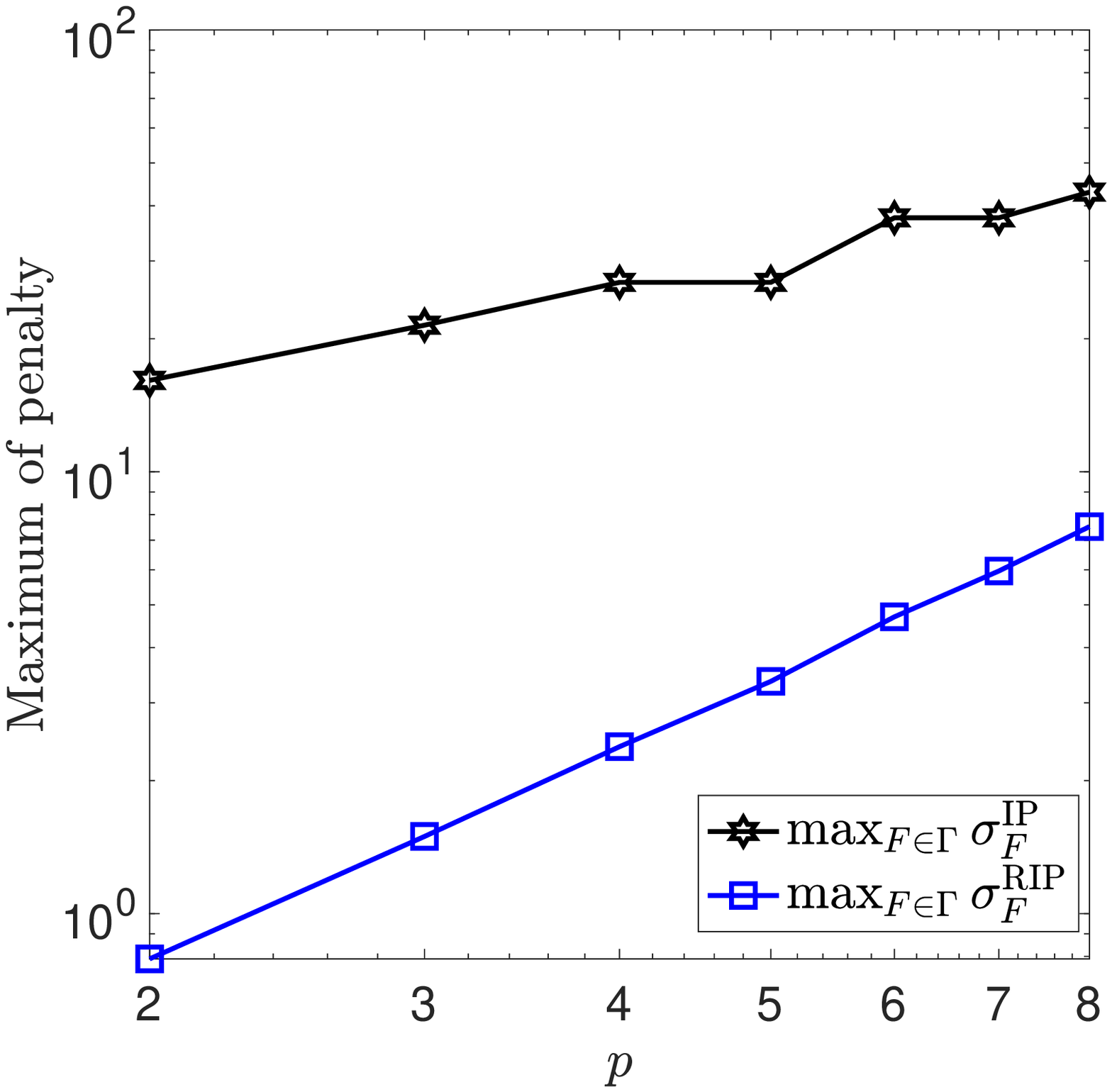}
	\includegraphics[height=5.5cm,width=6cm]{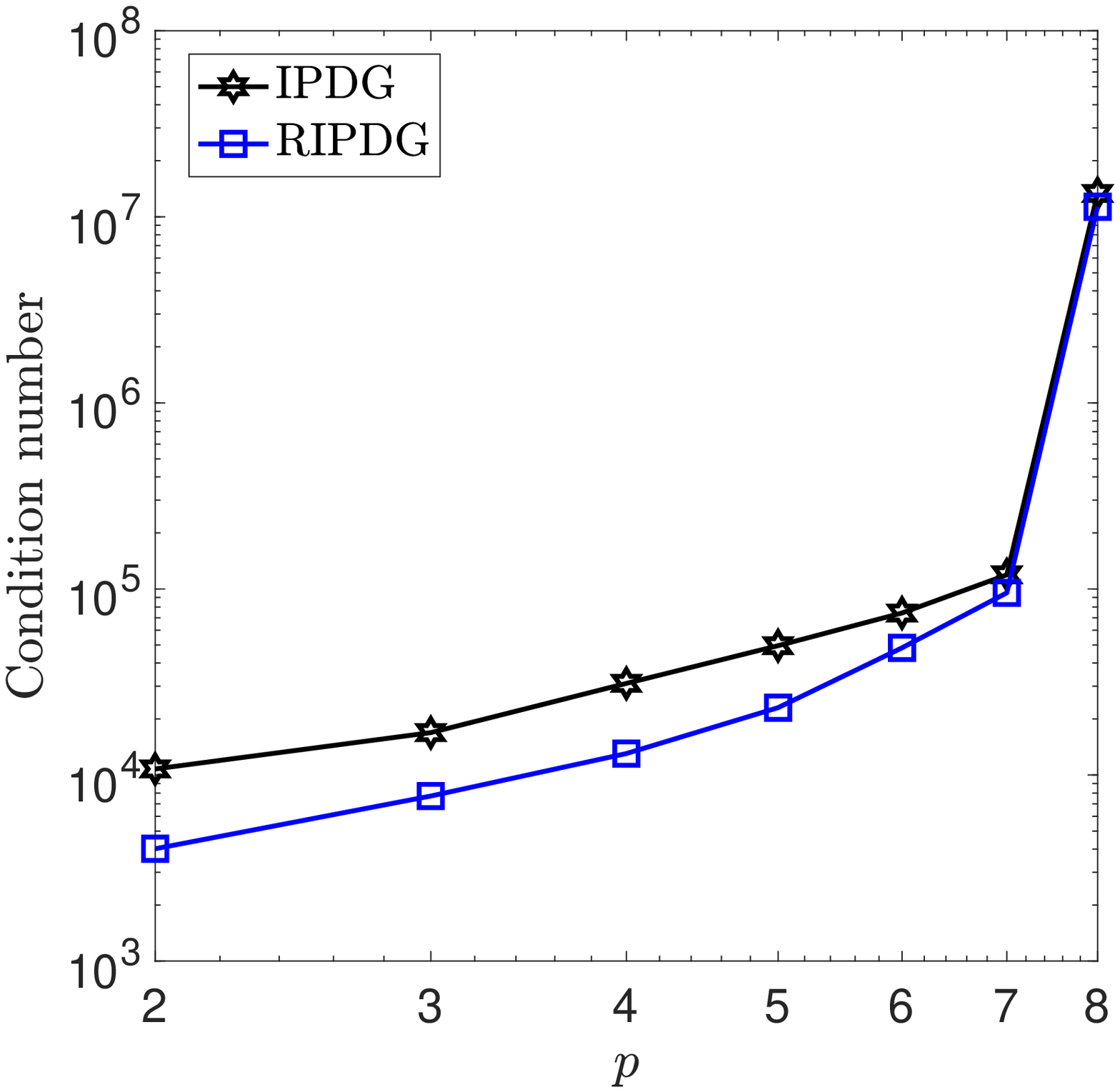}
	\caption{Example 1. ``Zigzag'' mesh. Maximum of the penalty parameter (left) and the condition number of the linear system (right)  for $\epsilon=10^{-3}$ and $p=1,\dots,8$.}\label{fig:ex1_condition_no_sigma_zigzag}
\end{figure}

	\begin{figure}[t]
	\includegraphics[height=5.5cm,width=6cm]{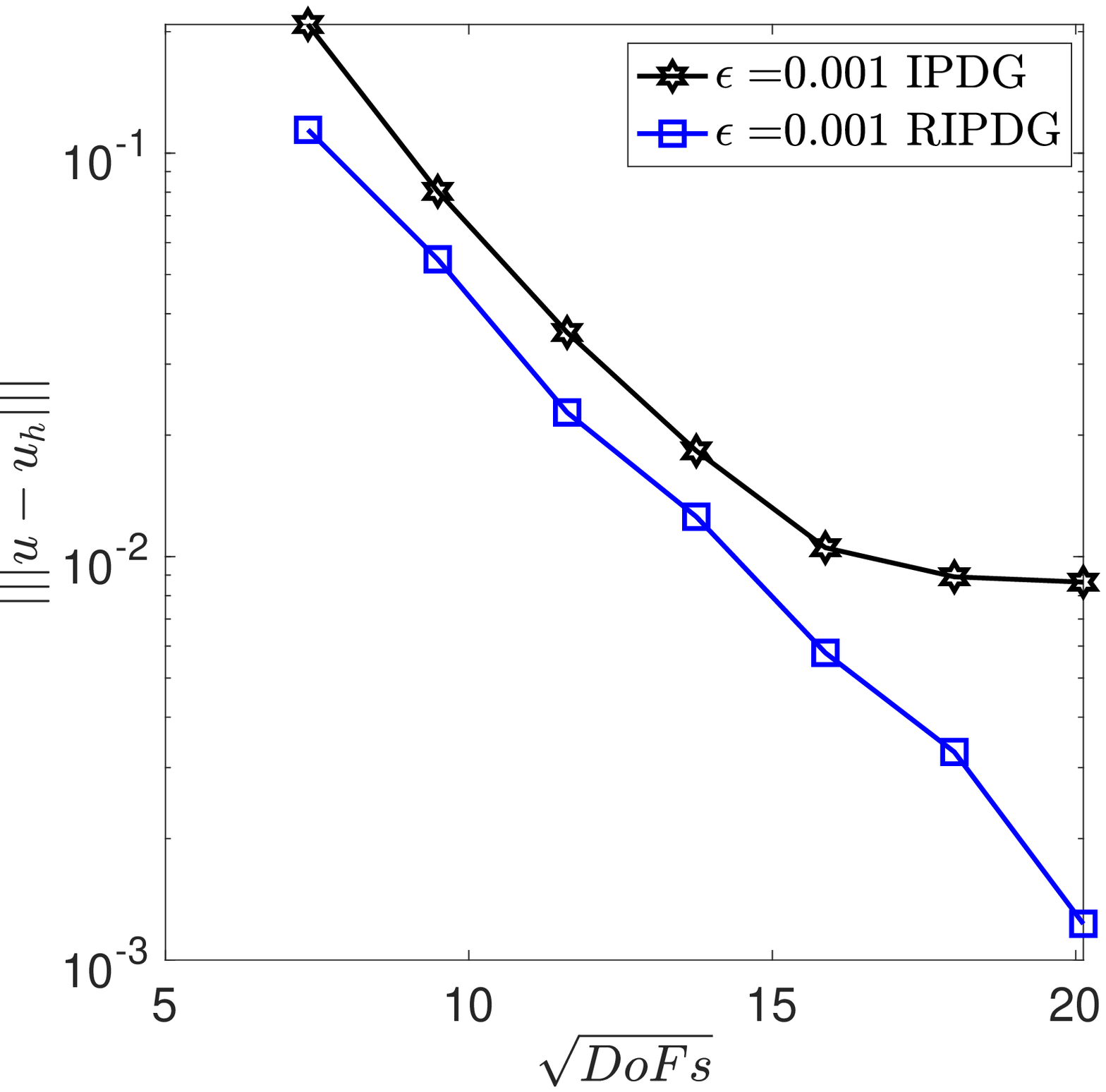}
	\includegraphics[height=5.5cm,width=6cm]{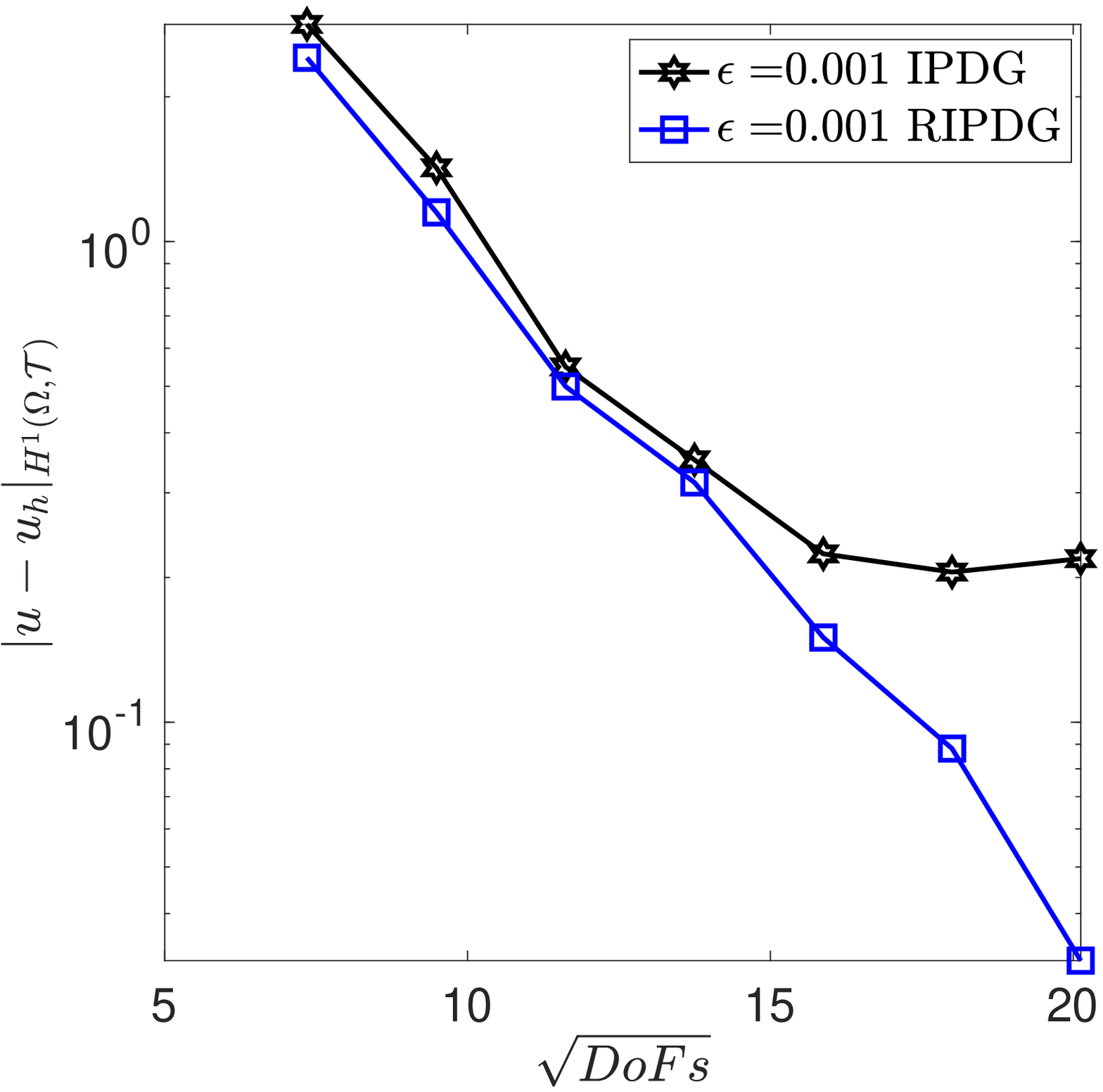}
	\caption{Example 1. ``Zigzag'' mesh. Convergence in dG-norm (left),  broken $H^1$-seminorm (right), 
		for $\epsilon=10^{-3}$ and $p=1,\dots,8$.}\label{fig:ex1_error_zigzag}
\end{figure}

\subsection{Example 2: Poisson problem with Gaussian solutions.} We now test the behaviour of each method against rapidly changing local polynomial degree $p$.  To that end, in $\Omega: =(-1,1)^2$, we consider the Poisson problem, with $a=I_{2\times 2}$ the $2\times 2$-identity matrix, with Dirichlet boundary conditions, and we choose $f$ such that the exact solution $u$ is given by
\begin{equation}
u(x,y): = \exp(-\alpha (x^2+y^2)),
\end{equation}
for some $\alpha>0$, which is analytic; in Figure \ref{ex2} (left) $u$ for $\alpha=100$ is depicted.

\begin{figure}
	\includegraphics[height=5.5cm,width=6cm]{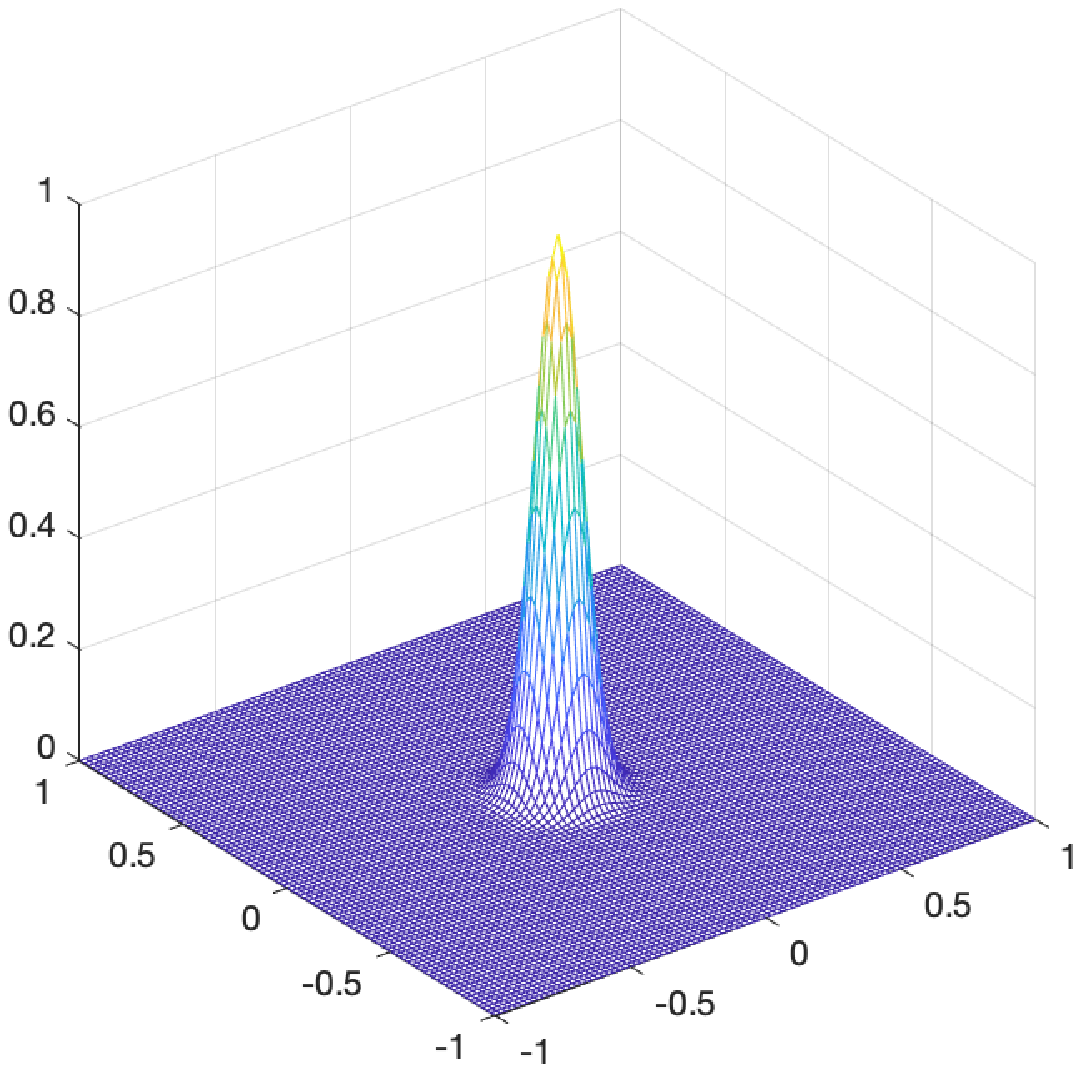}
	\includegraphics[height=5cm,width=5cm]{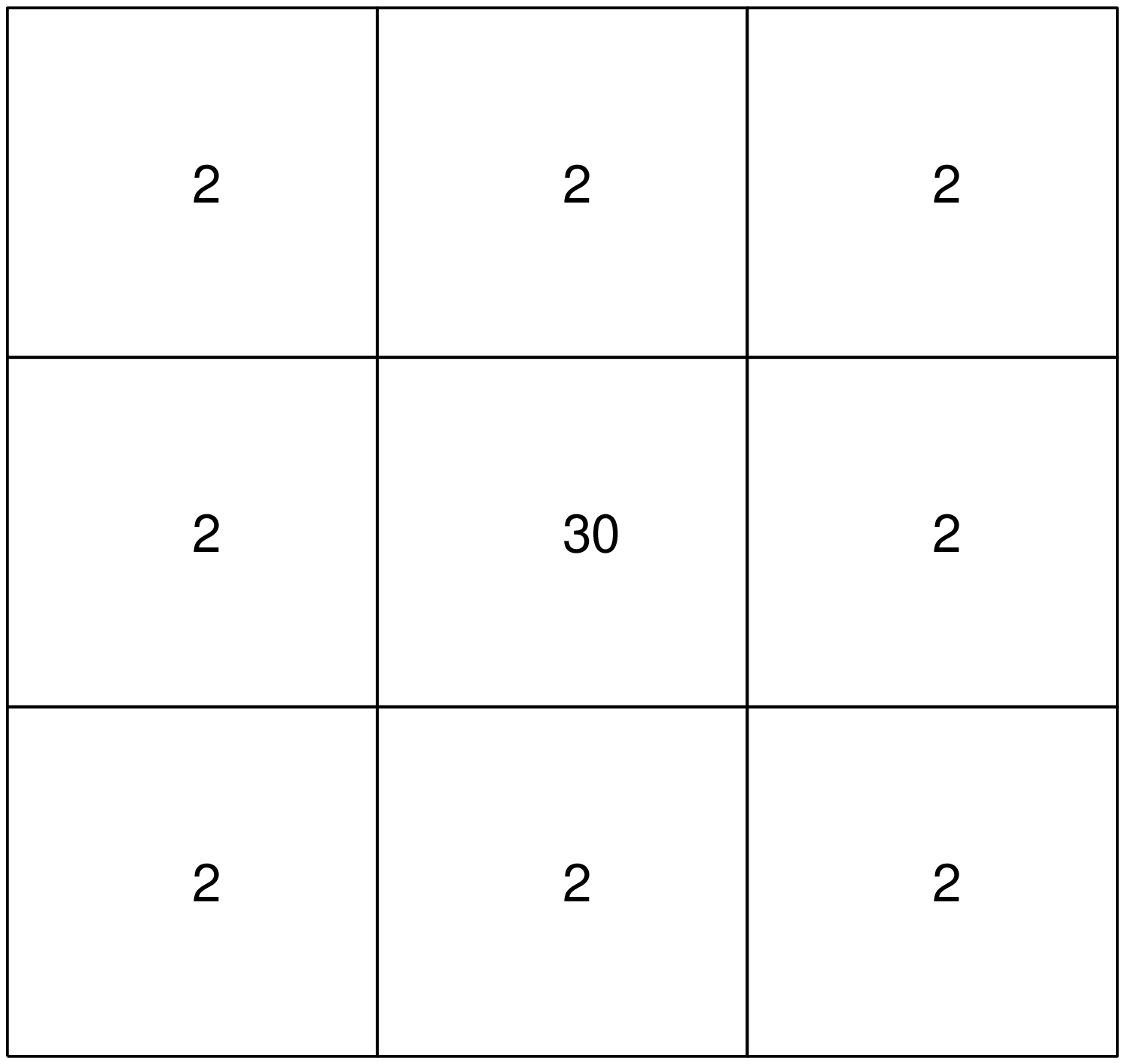}
	\caption{Example 2.  $u$ for $\alpha=100$ (left) and mesh and local polynomial degree distribution for the results of Table \ref{ex2:Comparison for dG methods} (right).}
	\label{ex2}
\end{figure}

For the first numerical experiment, we employ a $9$-element uniform square mesh polynomial degree $p=2$ in the eight boundary elements and polynomial degree $p=30$ on the interior $\tilde{K}:=(-1/3,1/3)^2$; this approximation space is presented in Figure \ref{ex2} (right). 
In Table \ref{ex2:Comparison for dG methods},  we present the comparison  between IPDG and RIPDG in terms of the maximum penalty used, condition number of the respective stiffness matrices and three different error measures. As we can see, RIPDG is outperforming in all respects, with the most striking improvement being in the stiffness matrix condition number. 

\begin{table}[h] 
	\begin{center}
		\begin{tabular}{||c|c|c|c|c|c||} 					
			\hline   
			method &  	 
			$\max_{F\subset\Gamma}^{}\sigma_F$& condition no.& $\|u-u_h\|$&$|u-u_h|_{H^1(\Omega,\mathcal{T})}$& 	 
			$|||u-u_h|||$   \\
			\hline											
			IPDG &  	 
			5.5800e+03& 5.1229e+06& 7.1923e-06& 1.9711e-04 & 	 
			2.1681e-04  \\
			\hline											
			RIPDG  &  	 
			61.69& 5.1148e+05&  6.5842e-06& 1.9169e-04 &  2.1305e-04 \\
			\hline		
		\end{tabular}  	
		\vspace{.2cm}
	\end{center}
	\caption{Example 2. $\alpha=100$. Comparison on the mesh and polynomial degree distribution given in Figure \ref{ex2} (right).}
	\label{ex2:Comparison for dG methods}
		\vspace{-.2cm}
\end{table}

The significant difference in the maximum of the penalty parameter confirms the respective discussion in Section \ref{RIPDG}. As a consequence, we can see that IPDG's condition number is about \textcolor{black}{$10$ times larger than RIPDG's for exactly} the same problem and approximation space.

Finally, we demonstrate the qualitative behaviour between the two methods resulting from the significant differences in the penalization magnitude. To that end, we consider the same problem with $\alpha=10$, resulting to a significantly less sharp Gaussian bump at the centre of the domain. This problem is approximated by the mesh given in Figure \ref{fig:ex2_fine}, with each elemental polynomial degree also given.
\begin{figure}
	\includegraphics[height=4.5cm,width=4.5cm]{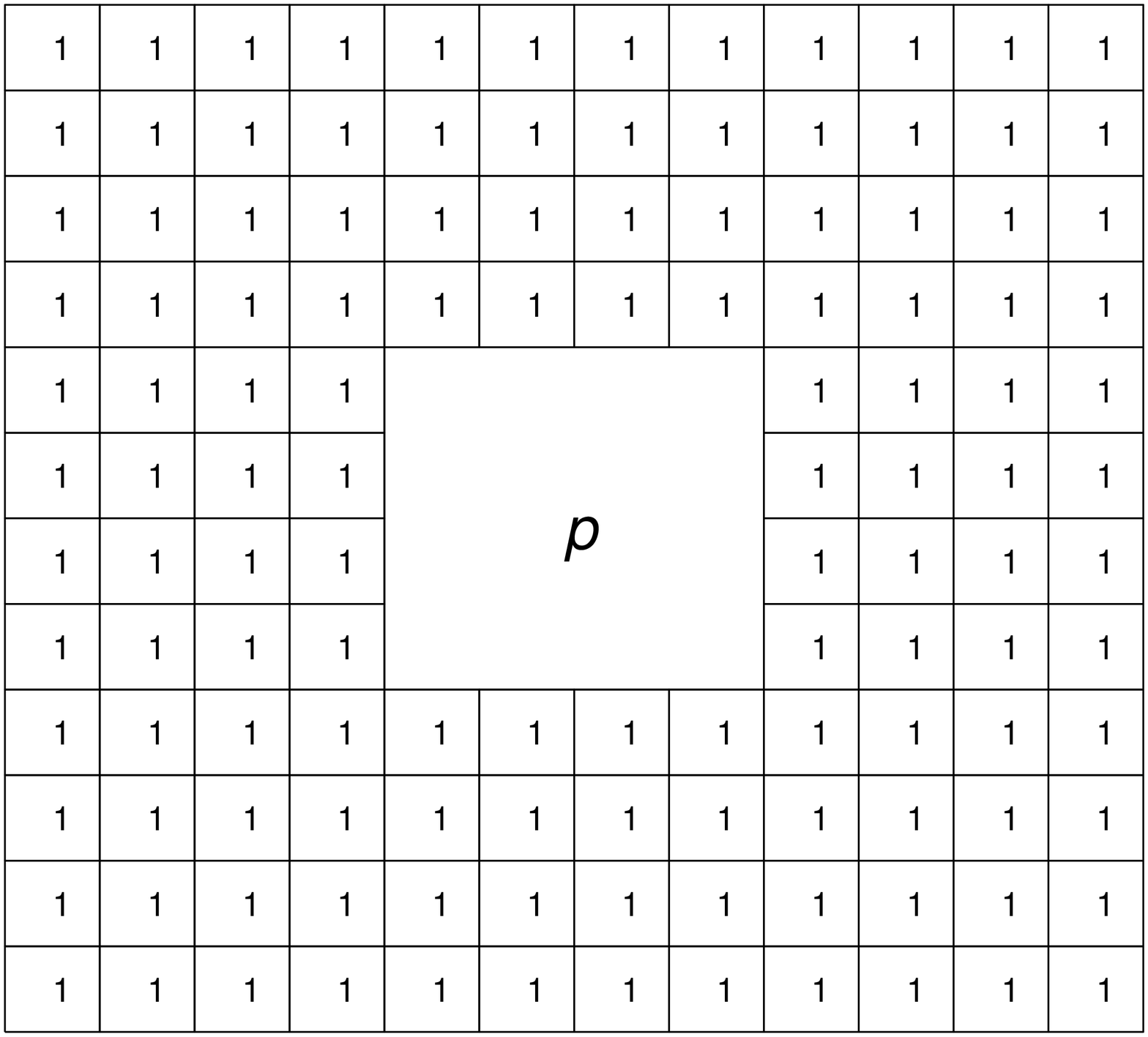}
	\caption{Mesh used for the solutions in Figure \ref{qual} with $p=1$ on all `small' elements and $p=5,8$ in the central `large' element. }\label{fig:ex2_fine}
\end{figure}
In Figure \ref{qual} we provide the solution profiles for RIPDG (left) and IPDG (right) for $\alpha=10$, $p=1$ on all `small' elements and $p=3$ (first line), $p=5$ (second line), and $p=8$ (last line) in the central `large' element. We observe significantly different behaviour in the `large' central element interfaces, due to the substantially smaller penalization required by the RIPDG method. Interestingly, however, we also observe modestly different behaviour further away from the `large' element interfaces also, especially for $p=3$.
\begin{figure}
		\includegraphics[height=5cm,width=6cm]{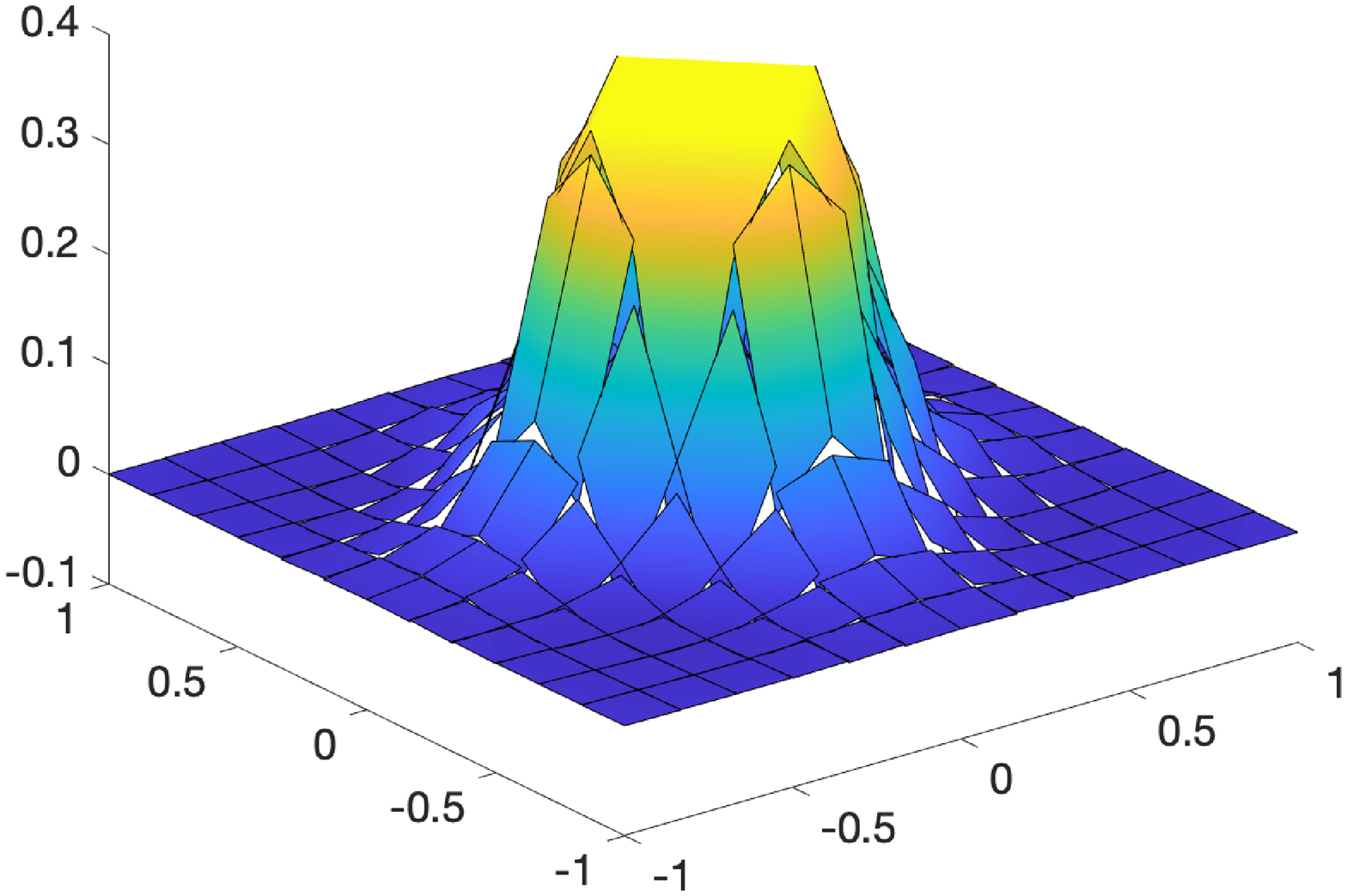}	\includegraphics[height=5cm,width=6cm]{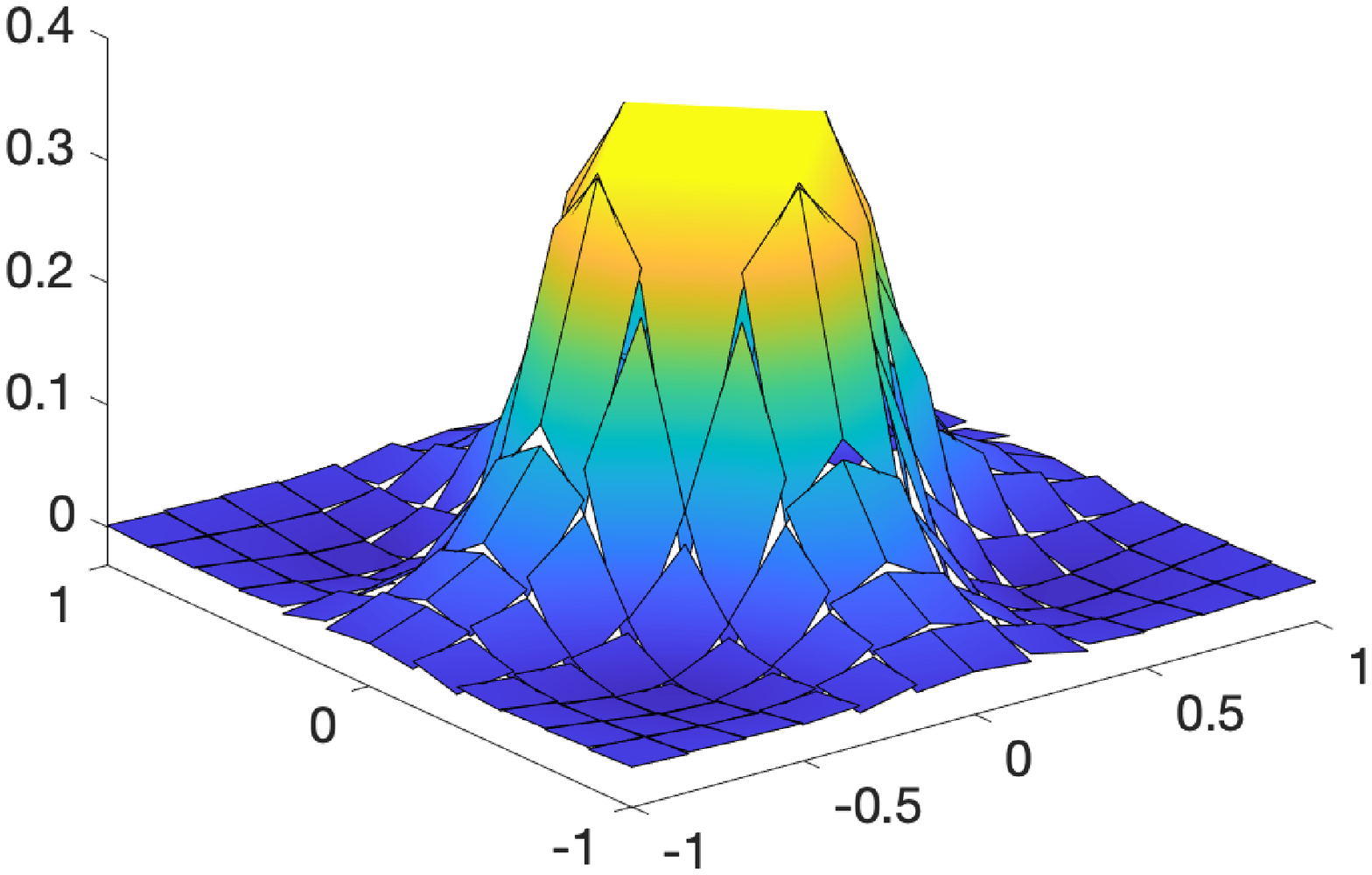}\\
	\includegraphics[height=5cm,width=6cm]{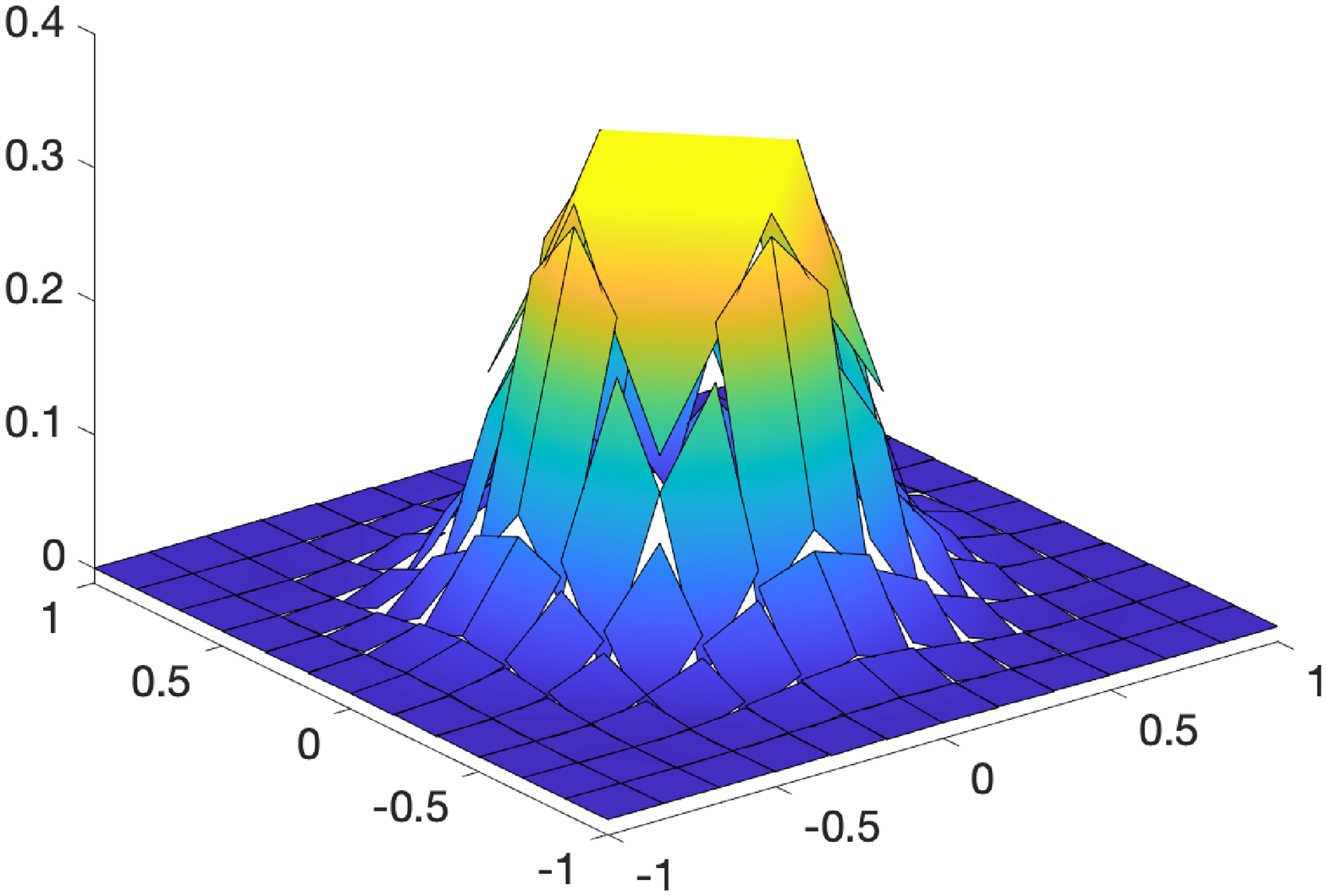}	\includegraphics[height=4cm,width=6cm]{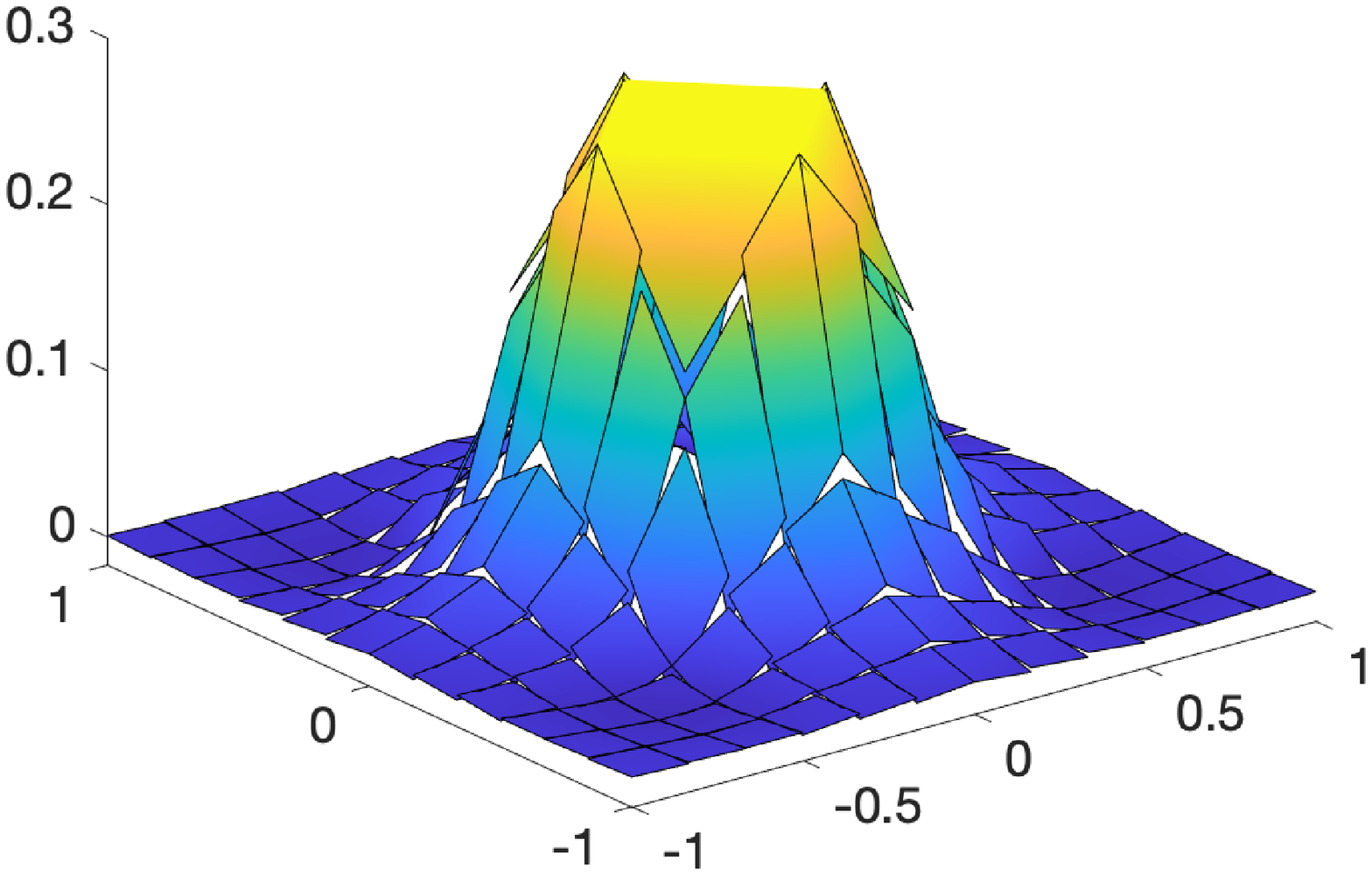}\vspace{.4cm}\\
		\includegraphics[height=4cm,width=6cm]{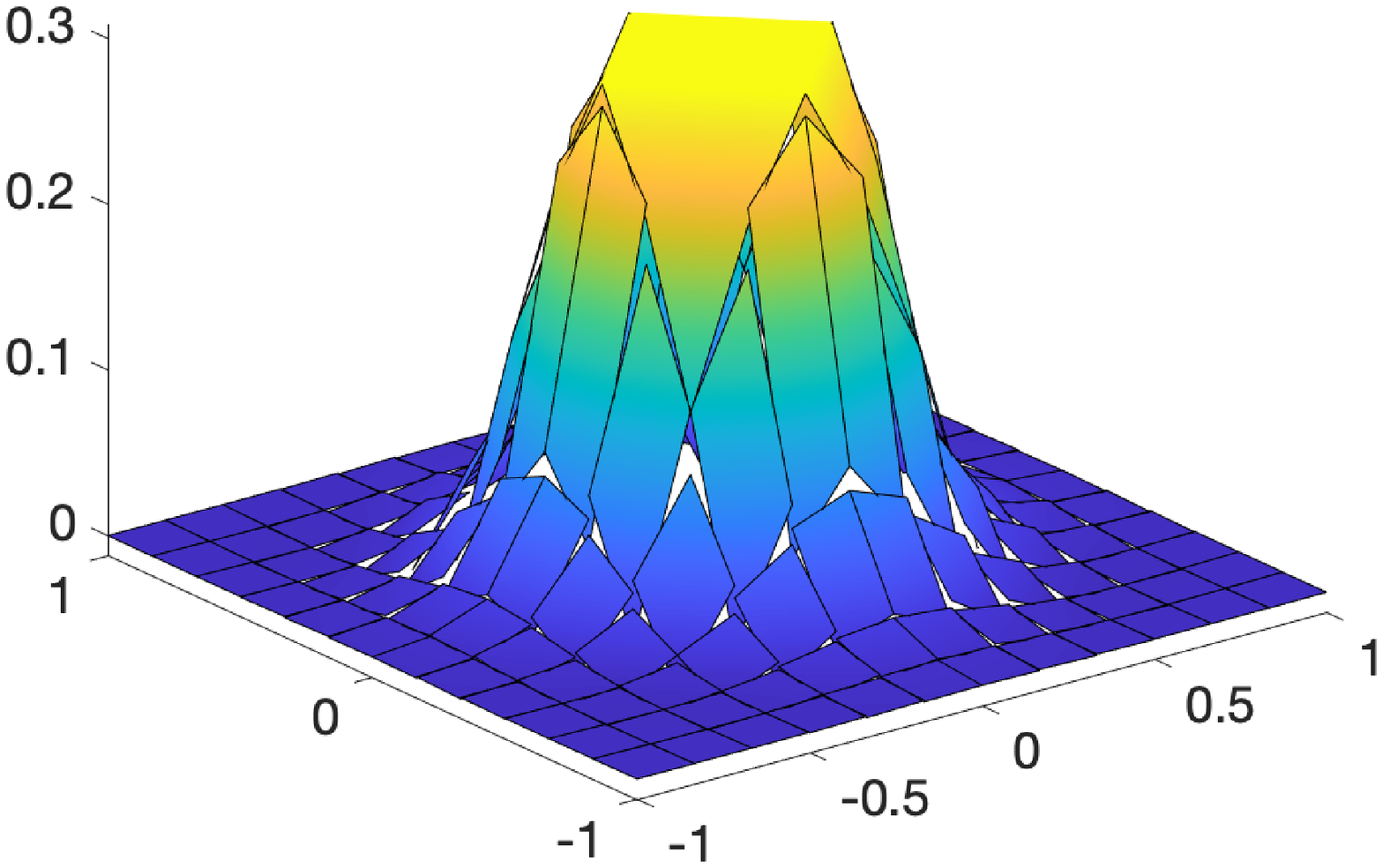}	\includegraphics[height=4cm,width=6cm]{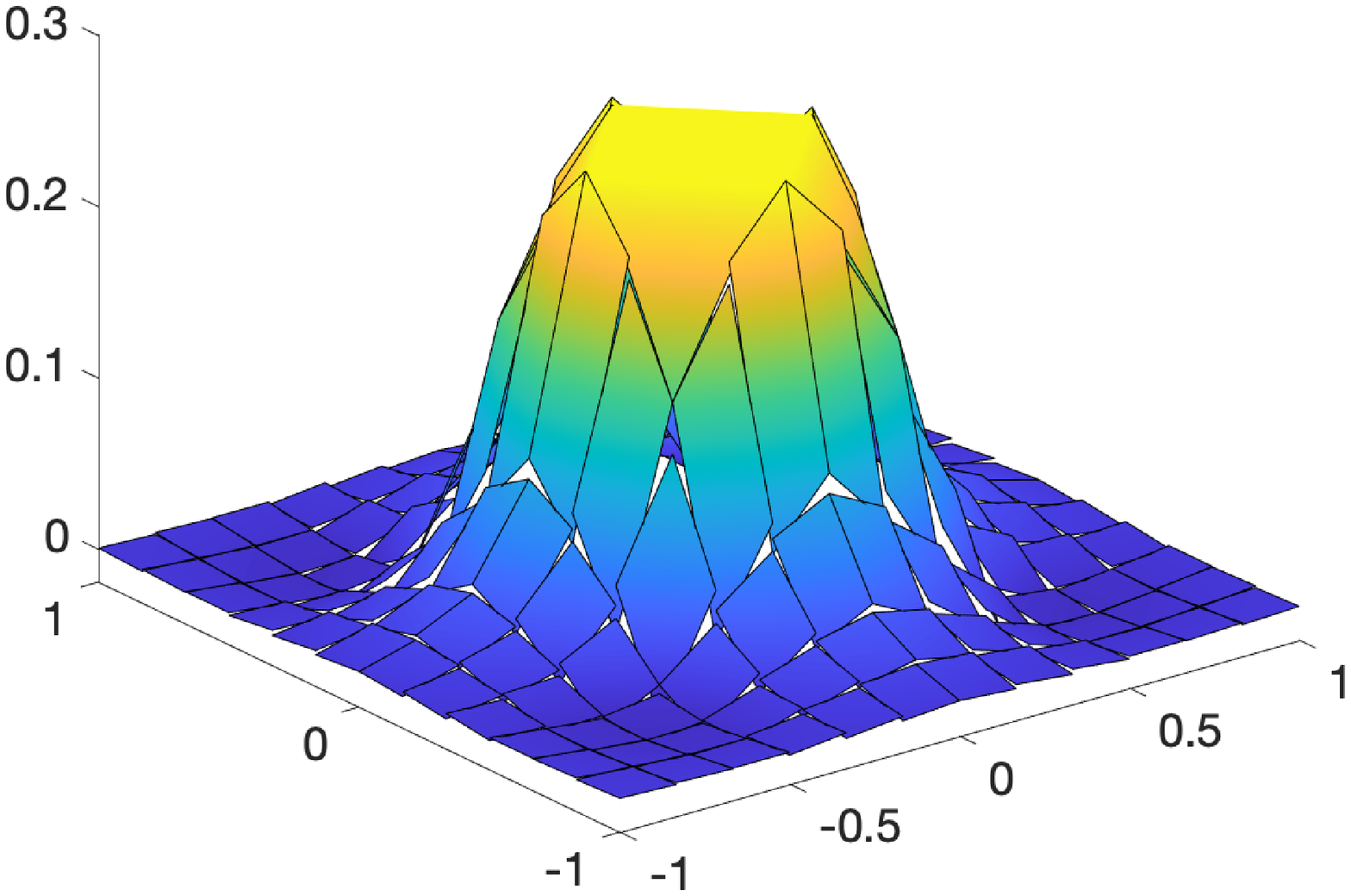}
		\caption{Solution profiles for RIPDG (left) and IPDG (right) for $\alpha=10$, $p=1$ on all `small' elements and $p=3$ (first line), $p=5$ (second line), and $p=8$ (last line) in the central `large' element. }\label{qual}
\end{figure}

%
%
%
%
%

 Of course, one may not use encounter or use approximations spaces as the ones considered in this example. Nonetheless, such mesh and polynomial degree distributions may arise locally in the context of $hp$-adaptive procedures. In any case, this study is revealing in how extreme approximation space local variation behaviour may result into possibly inferior performance of classical approaches. 


\Rev{
\subsection{Example 3. Highly agglomerated meshes.}
We now consider the Poisson problem with Dirichlet boundary conditions admitting the smooth solution $u:=\sin(\pi x)\sin(\pi y)$ on highly agglomerated meshes: we start from a fine simplicial mesh with $131,072$ triangles which is further agglomerated in a random fashion using standard partitioning tools into $37$ complicated polygonal elements shown in Figure  \ref{fig:ex4_mesh}. The discontinuity penalization parameter is selected as described briefly in Section \ref{poly_RIPDG}; we refer to \cite{dg-ease} for details.  On this fixed mesh, we compute the numerical solution for each method for uniform $p=1,\dots,5$; these are presented in Figure \ref{fig:ex4_error}. As expected, the errors in all norms decay exponentially. Even though the polynomial degree is uniform and neighbouring elements are of `similar' sizes, we can still observe an improvement in the errors for RIPDG against IPDG of about 25\%. In Figure \ref{fig:ex4_condition_no_sigma_wrong_mesh},  we can see that the maximum penalty and condition numbers for IPDG compared to RIPDG are approximately $4$ and $2.5$ times larger, respectively.
}

\begin{figure}
	\includegraphics[height=5.5cm,width=5.5cm]{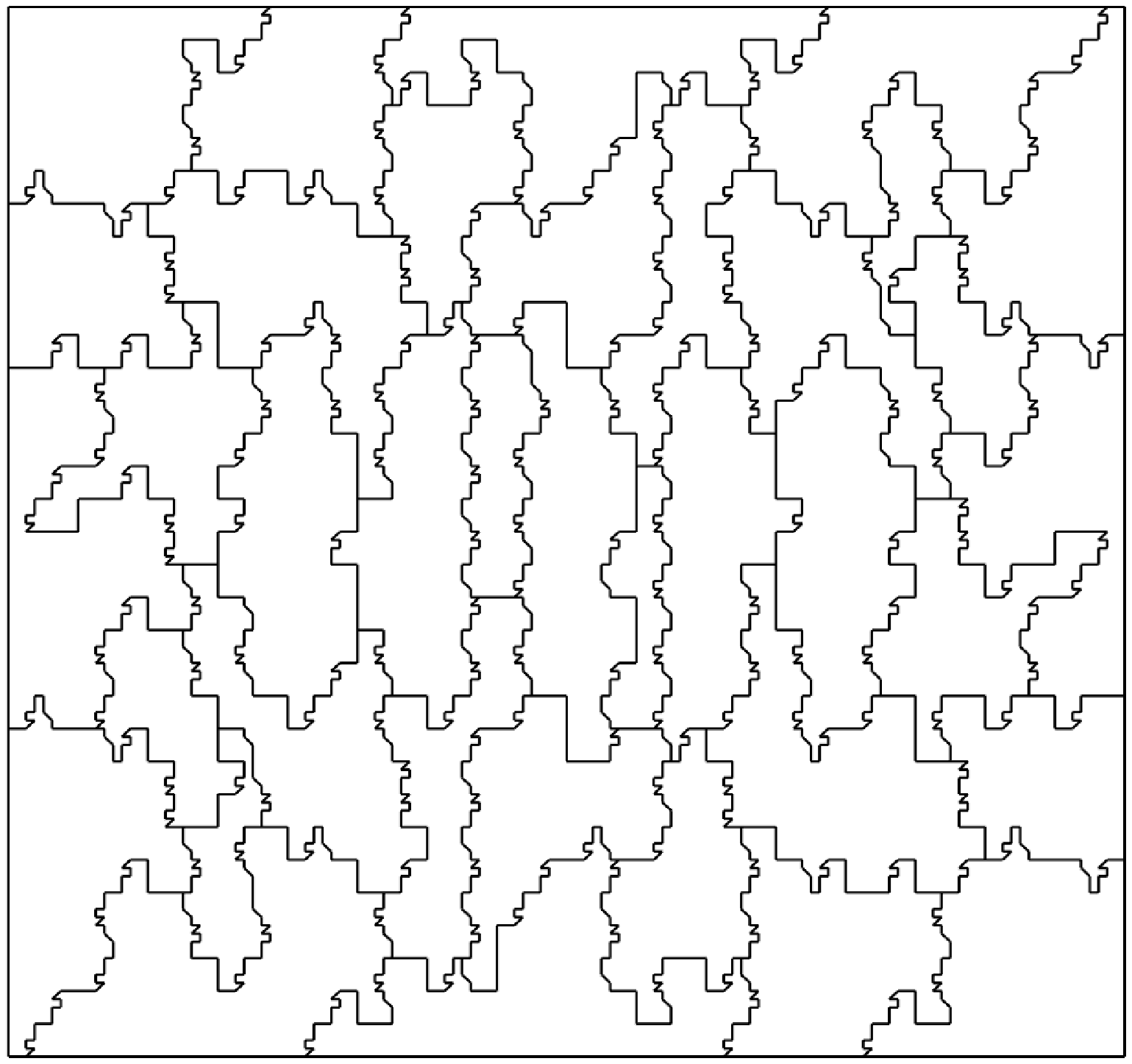}
	\caption{$37$ agglomerated elements made by $131072$ triangular meshes}\label{fig:ex4_mesh}
\end{figure}

\begin{figure}
	\includegraphics[height=5.5cm,width=6cm]{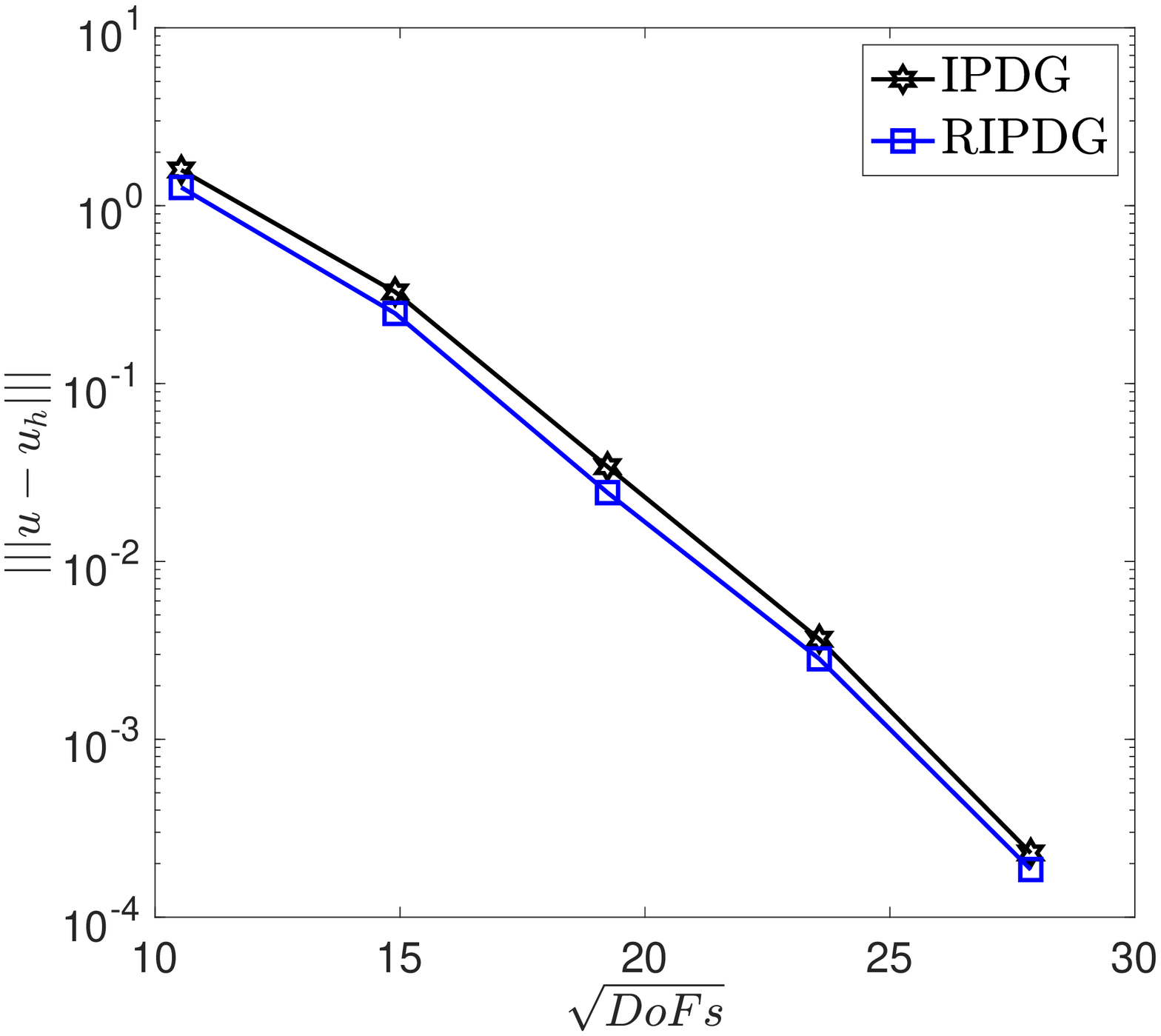}
	\includegraphics[height=5.5cm,width=6cm]{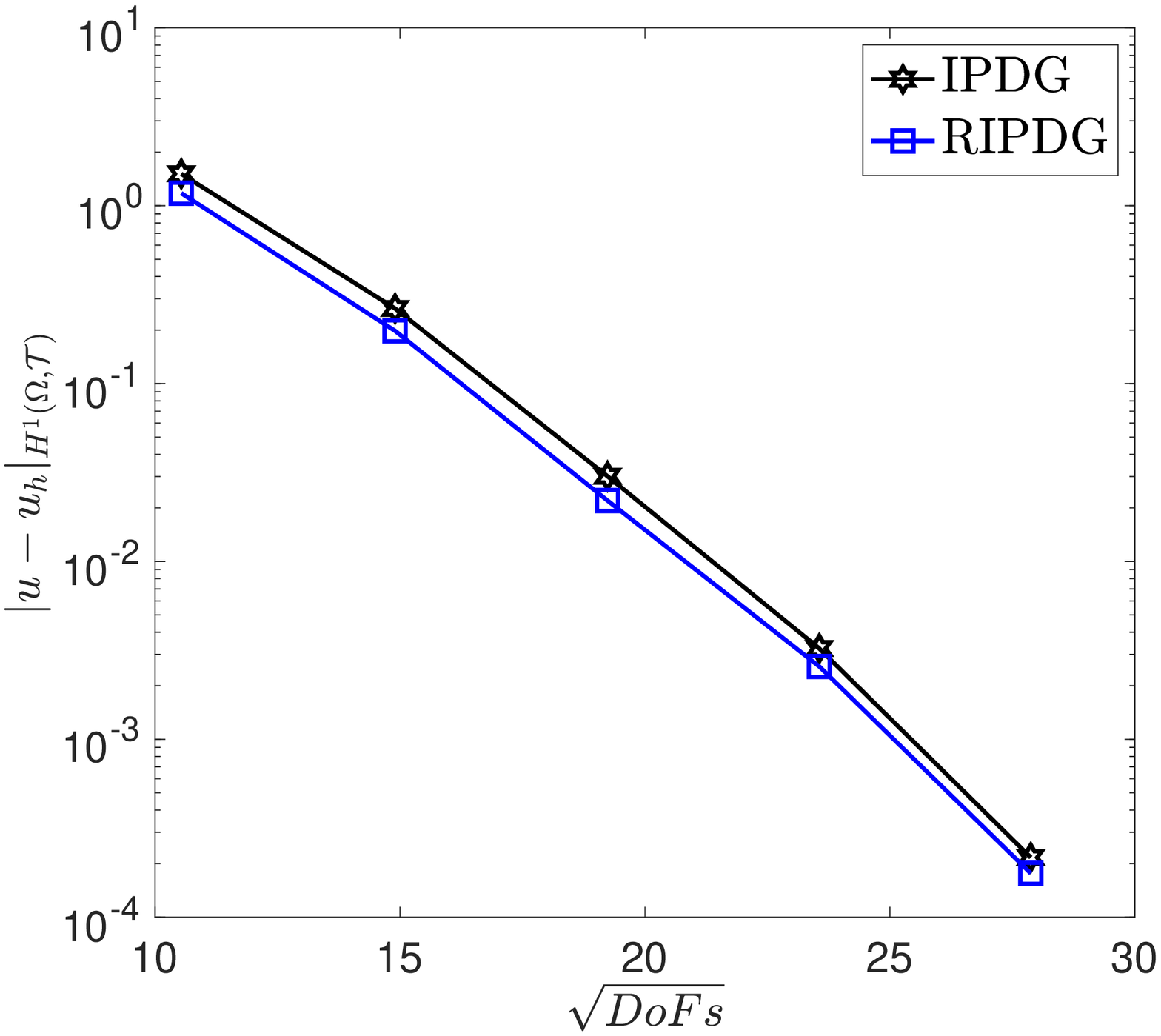}
	\caption{Exponential convergence in dG--norm (Left)  and broken $H^1$--norm (Right)  with $p=1,\dots,5$}\label{fig:ex4_error}
\end{figure}

\begin{figure}
	\includegraphics[height=5.5cm,width=6cm]{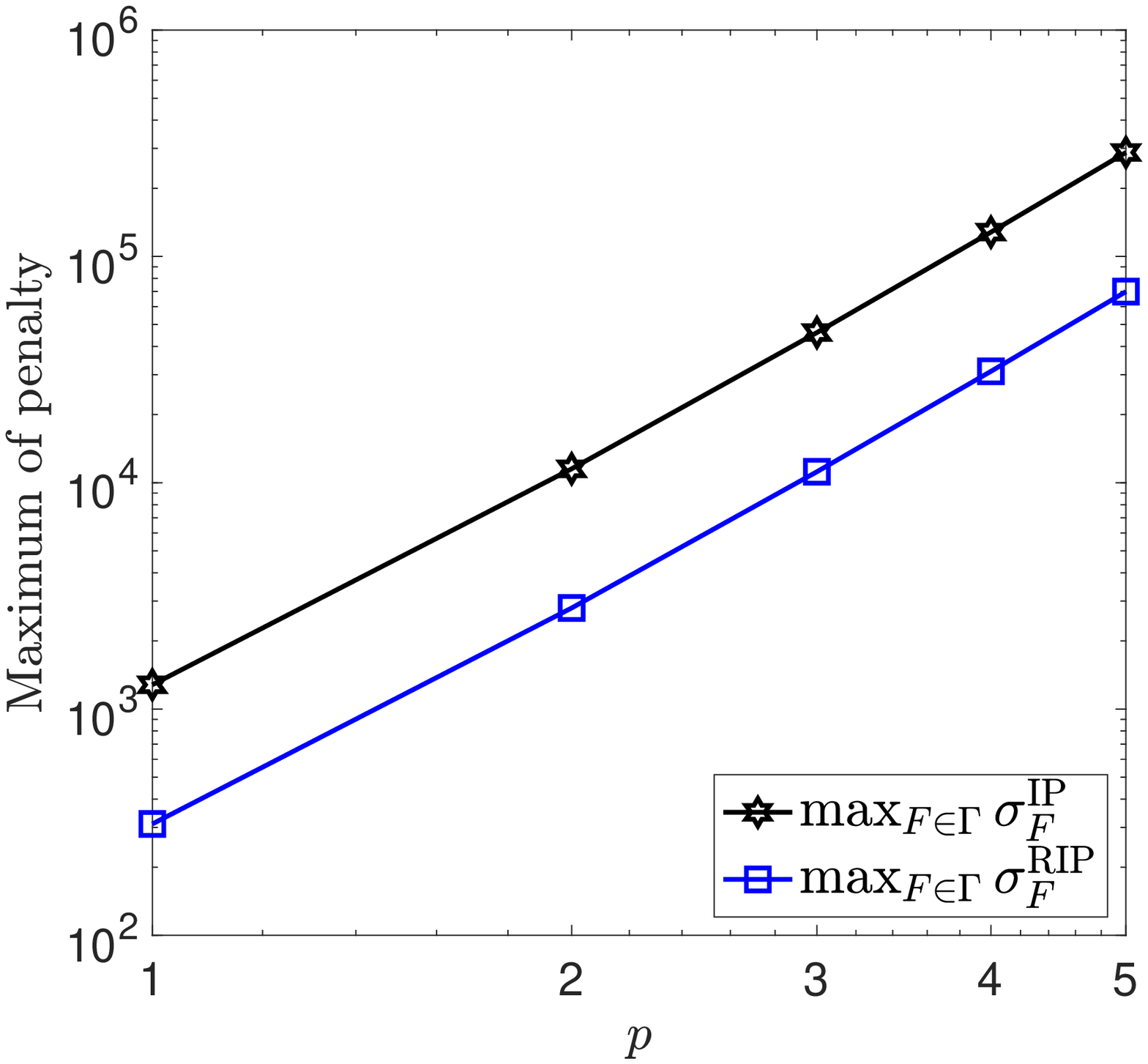}
	\includegraphics[height=5.5cm,width=6cm]{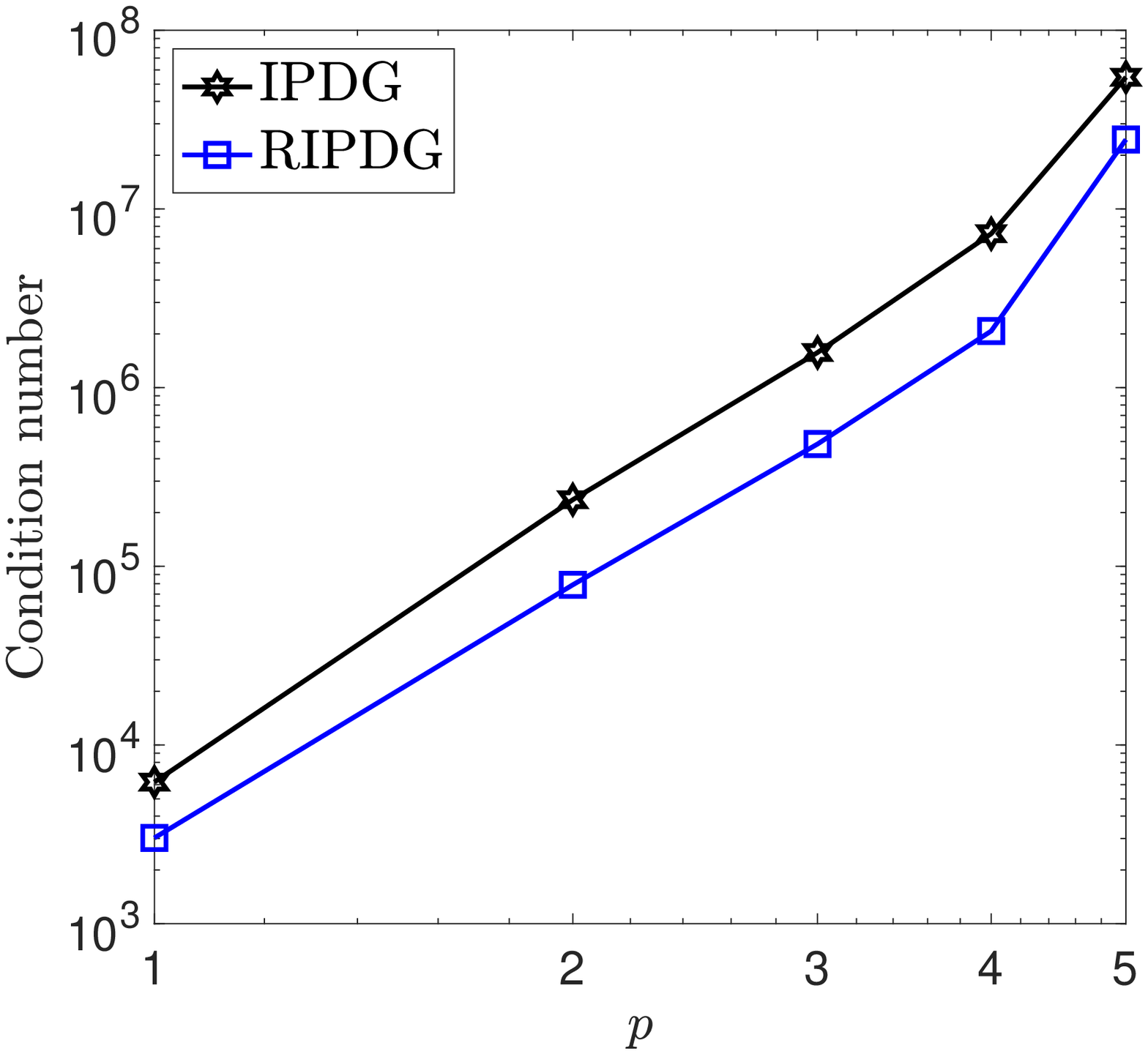}
	\caption{The maximum penalty parameters (Left)  and the condition number of the linear system(Right)  $p=1,\dots,5$}\label{fig:ex4_condition_no_sigma_wrong_mesh}
\end{figure}

	\bibliographystyle{siam}
	\bibliography{literature}

\def\cprime{$'$} \def\cprime{$'$} \def\cprime{$'$} \def\cprime{$'$}
\begin{thebibliography}{10}

\bibitem{Robust_Nitsche}
{\sc C.~Annavarapu, M.~Hautefeuille, and J.~E. Dolbow}, {\em A robust
  {N}itsche's formulation for interface problems}, Comput. Methods Appl. Mech.
  Engrg., 225/228 (2012), pp.~44--54.

\bibitem{arnold}
{\sc D.~Arnold}, {\em An interior penalty finite element method with
  discontinuous elements}, SIAM J. Numer. Anal., 19 (1982), pp.~742--760.

\bibitem{Babuska73}
{\sc I.~Babu\v{s}ka}, {\em The finite element method with penalty}, Math.
  Comp., 27 (1973), pp.~221--228.

\bibitem{baker}
{\sc G.~Baker}, {\em Finite element methods for elliptic equations using
  nonconforming elements}, Math. Comp., 31 (1977), pp.~45--59.

\bibitem{BassiJCP2012}
{\sc F.~Bassi, L.~Botti, A.~Colombo, D.~Di~Pietro, and P.~Tesini}, {\em On the
  flexibility of agglomeration based physical space discontinuous {G}alerkin
  discretizations}, J. Comput. Phys., 231 (2012), pp.~45--65.

\bibitem{burman_zunino}
{\sc E.~Burman and P.~Zunino}, {\em A domain decomposition method based on
  weighted interior penalties for advection-diffusion-reaction problems}, SIAM
  J. Numer. Anal., 44 (2006), pp.~1612--1638.

\bibitem{cangiani2015hp}
{\sc A.~Cangiani, Z.~Dong, E.~Georgoulis, and P.~Houston}, {\em $hp$--{V}ersion
  discontinuous {G}alerkin methods for advection--diffusion--reaction problems
  on polytopic meshes}, ESAIM: M2AN, 50 (2016), pp.~699--725.

\bibitem{cangiani2017hp}
{\sc A.~Cangiani, Z.~Dong, and E.~H. Georgoulis}, {\em {$hp$}-version
  space-time discontinuous {G}alerkin methods for parabolic problems on
  prismatic meshes}, SIAM J. Sci. Comput., 39 (2017), pp.~A1251--A1279.

\bibitem{dg-ease}
\leavevmode\vrule height 2pt depth -1.6pt width 23pt, {\em {$hp$}-version
  discontinuous {G}alerkin methods on essentially arbitrarily-shaped elements},
  Math. Comp., 91 (2022), pp.~1--35.

\bibitem{book}
{\sc A.~Cangiani, Z.~Dong, E.~H. Georgoulis, and P.~Houston}, {\em
  {$hp$}-version discontinuous {G}alerkin methods on polygonal and polyhedral
  meshes}, SpringerBriefs in Mathematics, Springer, Cham, 2017.

\bibitem{cangiani2013hp}
{\sc A.~Cangiani, E.~Georgoulis, and P.~Houston}, {\em $hp$--{V}ersion
  discontinuous {G}alerkin methods on polygonal and polyhedral meshes}, Math.
  Models Methods Appl. Sci., 24 (2014), pp.~2009--2041.

\bibitem{chernov}
{\sc A.~Chernov}, {\em Optimal convergence estimates for the trace of the
  polynomial {$L^{2}$}-projection operator on a simplex}, Math. Comp., 81
  (2012), pp.~765--787.

\bibitem{DGpolyBiharmonic}
{\sc Z.~Dong}, {\em Discontinuous galerkin methods for the biharmonic problem
  on polygonal and polyhedral meshes}, Int. J. Numer. Anal. Model., 16 (2019),
  pp.~825--846.

\bibitem{dong_exponent}
{\sc Z.~Dong}, {\em On the exponent of exponential convergence of {$p$}-version
  {FEM} spaces}, Adv. Comput. Math., 45 (2019), pp.~757--785.

\bibitem{dryja}
{\sc M.~Dryja}, {\em On discontinuous {G}alerkin methods for elliptic problems
  with discontinuous coefficients}, vol.~3, 2003, pp.~76--85.
\newblock Dedicated to Raytcho Lazarov.

\bibitem{ern_weighted}
{\sc A.~Ern, A.~F. Stephansen, and P.~Zunino}, {\em A discontinuous {G}alerkin
  method with weighted averages for advection-diffusion equations with locally
  small and anisotropic diffusivity}, IMA J. Numer. Anal., 29 (2009),
  pp.~235--256.

\bibitem{thesis}
{\sc E.~Georgoulis}, {\em Discontinuous {G}alerkin methods on shape-regular and
  anisotropic meshes}, D.Phil. Thesis, University of Oxford,  (2003).

\bibitem{georgoulis2006note}
{\sc E.~Georgoulis and A.~Lasis}, {\em A note on the design of $hp$-version
  interior penalty discontinuous {G}alerkin finite element methods for
  degenerate problems}, IMA J. Numer. Anal., 26 (2006), pp.~381--390.

\bibitem{gudi}
{\sc T.~Gudi}, {\em A new error analysis for discontinuous finite element
  methods for linear elliptic problems}, Math. Comp., 79 (2010),
  pp.~2169--2189.

\bibitem{MR4078806}
{\sc T.~Gustafsson, R.~Stenberg, and J.~Videman}, {\em On {N}itsche's method
  for elastic contact problems}, SIAM J. Sci. Comput., 42 (2020),
  pp.~B425--B446.

\bibitem{newpaper}
{\sc P.~Houston, C.~Schwab, and E.~S{\"u}li}, {\em Discontinuous {$hp$}-finite
  element methods for advection-diffusion-reaction problems}, SIAM J. Numer.
  Anal., 39 (2002), pp.~2133--2163.

\bibitem{melenk1999hp}
{\sc J.~Melenk and C.~Schwab}, {\em An $hp$--finite element method for
  convection-diffusion problems in one dimension}, IMA J. Numer. Anal., 19
  (1999), pp.~425--453.

\bibitem{nitsche1971variationsprinzip}
{\sc J.~Nitsche}, {\em {\"U}ber ein variationsprinzip zur l{\"o}sung von
  dirichlet-problemen bei verwendung von teilr{\"a}umen, die keinen
  randbedingungen unterworfen sind}, in Abhandlungen aus dem mathematischen
  Seminar der Universit{\"a}t Hamburg, vol.~36, Springer, 1971, pp.~9--15.

\bibitem{or73}
{\sc O.~Oleinik and E.~Radkevi\v{c}}, {\em Second Order Equations with
  Nonnegative Characteristic Form}, American Mathematical Society, 1973.

\bibitem{rwg}
{\sc B.~Rivi{\`e}re, M.~Wheeler, and V.~Girault}, {\em A priori error estimates
  for finite element methods based on discontinuous approximation spaces for
  elliptic problems}, SIAM J. Numer. Anal., 39 (2001), pp.~902--931.

\bibitem{veeser-zanotti}
{\sc A.~Veeser and P.~Zanotti}, {\em Quasi-optimal nonconforming methods for
  symmetric elliptic problems. {III}---{D}iscontinuous {G}alerkin and other
  interior penalty methods}, SIAM J. Numer. Anal., 56 (2018), pp.~2871--2894.

\bibitem{warburton2003constants}
{\sc T.~Warburton and J.~S. Hesthaven}, {\em On the constants in $hp$-finite
  element trace inverse inequalities}, Comput. Methods Appl. Mech. Engrg., 192
  (2003), pp.~2765--2773.

\bibitem{wheeler:78}
{\sc M.~Wheeler}, {\em An elliptic collocation-finite element method with
  interior penalties}, SIAM J. Numer. Anal., 15 (1978), pp.~152--161.

\end{thebibliography}
	
\end{document}